\theoremstyle{plain}
\newtheorem{theorem}{Theorem}[section]
\newtheorem{lemma}[theorem]{Lemma}
\newtheorem{proposition}[theorem]{Proposition}
\newtheorem{corollary}[theorem]{Corollary}
\theoremstyle{definition}
\newtheorem{definition}[theorem]{Definition}
\newtheorem{remark}[theorem]{Remark}
\def\Fq{{\mathbb F}_q}
\def\Fqm{{\mathbb F}_{q^m}}
\newcommand{\GL}{\operatorname{GL}}
\newcommand{\G}{\operatorname{G}}
\newcommand{\TSRI}{\operatorname{TSRI}}
\newcommand{\TSR}{\operatorname{TSR}}
\begin{document}

\title[An Asymptotic Formula for the number of Irreducible TSRs]
{An Asymptotic Formula for the number of Irreducible Transformation Shift Registers}
\author{Stephen D. Cohen}
\address{School of Mathematics and Statistics, University of Glasgow \newline \indent Glasgow G12 8QW, Scotland}
\email{Stephen.Cohen@glasgow.ac.uk}
\author{Sartaj Ul Hasan}
\address{Scientific Analysis Group, Defence Research and Development
Organisation \newline \indent
Metcalfe House, Delhi 110054, India}
\email{sartajulhasan@gmail.com}
\author{Daniel Panario}
\address{School of Mathematics and Statistics,
Carleton University, Ottawa, K1S 5B6, Canada}
\email{daniel@math.carleton.ca}
\author{Qiang Wang}
\address{School of Mathematics and Statistics,
Carleton University, Ottawa, K1S 5B6, Canada}
\email{wang@math.carleton.ca}

\keywords{Block companion matrix; characteristic polynomial; irreducible polynomial; primitive polynomial; Galois group; transformation shift register}

\subjclass[2010]{15B33, 12E20, 11T71 and 12E05.}

\begin{abstract}
We consider the problem of enumerating the number of irreducible
transformation shift registers. We give an asymptotic formula
for the number of irreducible transformation shift registers in some special cases. Moreover, we derive a short proof for the exact number
of irreducible transformation shift registers of order two using
a recent generalization of a theorem of Carlitz.
\end{abstract}
\date{\today}
\maketitle

\section{Introduction}
Linear feedback shift registers (LFSRs) are devices that are
used to generate sequences over a finite field. This sort of
sequence has received numerous applications in various
disciplines including in the design of stream ciphers; see,
for example, \cite{GG,LN}. For all practical purposes, these
sequences are generally considered over a binary field. The
sequences with maximal period have been proved to have good
cryptographic properties. LFSRs corresponding to sequences
with maximum period are known as primitive LFSRs.

The number of primitive LFSRs of order $n$ over a finite field
$\Fq$ is given by
\begin{equation} \label{NoLFSR}
\frac {\phi(q^{n}-1)}{n},
\end{equation}
where $\phi$ is Euler's totient function. A similar formula
for the number of irreducible LFSRs (that is, when the
characteristic polynomial of the LFSR is irreducible) of
order $n$ over a finite field $\Fq$ is given by
\begin{equation} \label{NoIrrLFSR}
 {\frac{1}{n} \displaystyle \sum_{d\mid n}
              \mu\left(d\right)q^{\frac{n}{d}}},
\end{equation}
where $\mu$ is the M\"{o}bius function.

Niederreiter \cite{N2} introduces the notion of \emph{multiple
recursive matrix method}, which may be considered as a
generalization of the classical LFSRs. 
Zeng et.~al \cite{Zeng} consider the notion of $\sigma$-LFSR
which is a word-oriented stream cipher. It turns out that the latter is essentially same as Niederreiter's multiple recursive matrix method.
A conjectural formula for the number of
primitive $\sigma$-LFSRs of order $n$ was given in the binary
case in \cite{Zeng}. An extension of this conjectural formula
over the finite field $\Fqm$ given in \cite{GSM} states that
this number is
\begin{equation} \label{NoSigmaLFSR}
   \frac{\phi(q^{mn}-1)}{mn} q^{m(m-1)(n-1)}
        \displaystyle \prod_{i=1}^{m-1}(q^m-q^i).
\end{equation}
We refer to \cite{GSM} and \cite{GR} for recent progress on
this conjecture and to \cite{CT} for a proof of this conjecture.

It is also known from \cite{GR} and \cite{CT}, see also
\cite{Ram}, that the number of irreducible $\sigma$-LFSRs is 
\begin{equation}\label{NoIrrSigmaLFSR}
  {\displaystyle \frac{1}{mn}{q^{m(m-1)(n-1)}
   \displaystyle \prod_{i=1}^{m-1}(q^m-q^i)} \sum_{d\mid mn}
        \mu\left(d\right)q^{\frac{mn}{d}}}.
\end{equation}

We focus  on \emph{transformation shift registers}
(TSRs) in this paper. This notion was introduced by Tsaban and Vishne \cite{TV}
and it can be also considered as a generalization of classical
LFSRs. The notion of TSR was introduced to address a problem of
Preneel \cite{BP} on designing fast and secure LFSRs with the help
of the word operations of modern processors and the techniques of
parallelism. It may be noted that the family of TSRs is a subclass
of the family of $\sigma$-LFSRs. Dewar and Panario [\ref{DP1},
\ref{DP3}] further studied the theory of TSRs.

We do not know yet any explicit formula like (\ref{NoLFSR}) and
(\ref{NoSigmaLFSR}) for the number of primitive TSRs. The problem
of enumerating primitive TSRs was first considered in \cite{HPW}. It was
proved that in order to count primitive TSRs, it is sufficient to
enumerate certain block companion matrices in a corresponding general linear
group. However, except few initial cases, this problem seems
rather difficult and still remains open.

Based on some empirical evidence, Tsaban and Vishne \cite{TV}
pointed out that irreducible TSRs contain a high proportion of
primitive TSRs. Thus in order to find a primitive TSR in practice
one might try an exhaustive search only among the irreducible
ones instead of over all TSRs; there is a high chance that
one might end up getting a primitive TSR in this way. This
reduces the search complexity of primitive TSRs. Motivated by
this fact and in an attempt to obtain a nice formula like
(\ref{NoIrrLFSR}) and (\ref{NoIrrSigmaLFSR}), we consider here
the problem of enumerating irreducible TSRs. In fact, this
problem was first considered in \cite{Ram} where the author
gives a formula for the number of irreducible TSRs of order two.
Moreover, in \cite{Ram}, as a consequence of this result, a
new proof of a theorem of Carlitz about the number of the self
reciprocal irreducible monic polynomials of a given degree over
a finite field is deduced.

Our paper is organized in the following manner.
In Section \ref{sectsr} we recall
some results concerning transformation shift registers needed
in this work.

As it has been mentioned earlier, Ram \cite{Ram} gives a formula
for the number of irreducible TSRs of order two. In Section
\ref{TSRorder2} we give a short proof of Ram's result using a
variant of a theorem of Carlitz recently proved \cite{Ahmadi}. Asymptotic analysis of the number of irreducible TSRs of order two is carried out in Section \ref{Asymptotic}.
Finally, in Section \ref{Asymptoticodd}, we prove an asymptotic
formula for the number of irreducible TSRs of any order when $q$ is odd.

\section {Transformation Shift Registers} \label{sectsr}

We denote by $\Fq$ the finite field with $q=p^r$ elements,
where $p$ is a prime number and $r$ is a positive integer,
and by $\Fq[X]$ the ring of polynomials in one variable $X$
with coefficients in $\Fq$. For every set $S$, we denote by
$|S|$, the cardinality of the set $S$. Also we denote by
$M_d(\Fq)$, the set of all $d\times d$ matrices with entries
in $\Fq$. We now recall from \cite{HPW} some definitions and
results concerning transformation shift registers.

Throughout this and subsequent sections, we fix positive
integers $m$ and $n$, and a vector space basis $\{\alpha_0,
\dots, \alpha_{m-1}\}$ of ${\mathbb F}_{q^m}$ over
$\mathbb F_q$. Given any $ s\in {\mathbb F}_{q^m}$, there are
unique $s_0, \dots, s_{m-1} \in {\mathbb F}_{q}$ such that
$s= s_0 \alpha_0 + \cdots + s_{m-1}\alpha_{m-1}$,
and we shall denote the corresponding co-ordinate vector
$(s_0, \dots, s_{m-1})$ of $s$ by $\mathbf{s}$. Evidently,
the association $s\longmapsto \mathbf{s}$ gives a vector
space isomorphism of $\mathbb F_{q^m}$ onto $\mathbb F_q^m$.
Elements of $\mathbb F_q^m$ may be thought of as row vectors
and so $\, \mathbf{s}C$ is a well-defined element of
$\mathbb F_q^m$ for any $\mathbf{s} \in \mathbb F_q^m$ and
$C\in M_m(\Fq)$.

\begin{definition} \label{def:tsr} 
Let $c_0, c_1, \dots, c_{n-1} \in \Fq$ and $ A \in M_m(\mathbb F_q)$.
Given any $n$-tuple $(\mathbf{s}_0, \dots, \mathbf{s}_{n-1})$ of
elements of  $\mathbb F_{q^m}$, let $(\mathbf{s}_i)_{i=0}^{\infty}$
denote the infinite sequence of elements of ${\mathbb F}_{q^m}$
determined by the following linear recurrence relation:
\begin{eqnarray}
{\mathbf{s}}_{i+n}={\mathbf{s}}_i(c_0A)+{\mathbf{s}}_{i+1}(c_1A)
          +\cdots +{\mathbf{s}}_{i+n-1}(c_{n-1}A)
          \quad i=0,1,\dots. \label{tsr}
\end{eqnarray}
The system (\ref {tsr}) is a \emph{transformation shift register}
(TSR) of order $n$ over $\mathbb F_{q^m}$, while the sequence
$(\mathbf{s}_i)_{i=0}^{\infty}$ is the \emph{sequence generated
by the TSR} (\ref{tsr}). The $n$-tuple $(\mathbf{s}_0,\mathbf{s}_1,
\ldots, \mathbf{s}_{n-1})$ is the \emph{initial state} of the TSR
(\ref{tsr}) and the polynomial $I_mX^n -(c_{n-1}A)X^{n-1}- \cdots
-(c_1A)X-(c_0A)$ with matrix coefficients is the \emph{tsr-polynomial}
of the TSR (\ref{tsr}), where $I_m$
denotes the $m\times m$ identity matrix over $\Fq$. The sequence $(\mathbf{s}_i)_{i=0}^{\infty}$
is \emph{ultimately periodic} if there are integers $r, n_0$ with
$r\ge 1$ and $n_0\geq0$ such that $\mathbf{s}_{j+r}=\mathbf{s}_j$
for all $j \geq n_0$. The least positive integer $r$ with this
property is the \emph{period} of $(\mathbf{s}_i)_{i=0}^{\infty}$
and the corresponding least nonnegative integer $n_0$ is the
\emph{preperiod} of $(\mathbf{s}_i)_{i=0}^{\infty}$. The sequence
$(\mathbf{s}_i)_{i=0}^{\infty}$ is \emph{periodic} if its preperiod
is $0$.
\end{definition}

The following proposition gives some basic facts about TSRs.
\begin{proposition} \cite{HPW} \label{tsrulti}
For the sequence $(\mathbf{s}_i)_{i=0}^{\infty}$ generated by
the TSR $(\ref{tsr})$ of order $n$ over $\mathbb F_{q^m}$, we have
\begin{enumerate}
\item[{\rm (i)}] $(\mathbf{s}_i)_{i=0}^{\infty}$ is ultimately
periodic, and its period is no more than $q^{mn}-1$;
\item[{\rm (ii)}] if $c_0 \neq 0$  and $A$ is nonsingular, then
$(\mathbf{s}_i)_{i=0}^{\infty}$ is periodic; conversely, if
$(\mathbf{s}_i)_{i=0}^{\infty}$ is periodic whenever the
initial state is of the form $(b, 0, \dots , 0)$, where
$b\in \mathbb F_{q^m}$ with $b\ne 0$, then $c_0A$ is nonsingular.
\end{enumerate}
\end{proposition}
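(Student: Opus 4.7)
The plan is to recast the recurrence (\ref{tsr}) as iteration of a linear self-map of the state space $\mathbb{F}_{q^m}^n$, and then derive both assertions from finiteness together with a short invertibility argument.

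For part~(i), I would first define the state at time $i$ to be the $n$-tuple $S_i = (\mathbf{s}_i, \dots, \mathbf{s}_{i+n-1})$, so that (\ref{tsr}) determines a fixed linear map $T : \mathbb{F}_{q^m}^n \to \mathbb{F}_{q^m}^n$ with $S_{i+1} = T(S_i)$. Since $|\mathbb{F}_{q^m}^n| = q^{mn}$, pigeonhole forces a repetition $S_i = S_j$ with $i < j \leq q^{mn}$, so $(S_i)$, and hence the scalar sequence $(\mathbf{s}_i)$, is ultimately periodic. To sharpen the bound from $q^{mn}$ to $q^{mn}-1$, I would observe that $T(\mathbf{0}) = \mathbf{0}$; any cycle is therefore either the fixed point $\{\mathbf{0}\}$ of length $1$ or is contained in the set of $q^{mn}-1$ nonzero states.

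For the forward direction of (ii), when $c_0 \neq 0$ and $A$ is nonsingular the matrix $c_0 A$ is invertible, so (\ref{tsr}) can be solved explicitly for $\mathbf{s}_i$ in terms of $\mathbf{s}_{i+1}, \dots, \mathbf{s}_{i+n}$. This exhibits $T$ as a bijection of $\mathbb{F}_{q^m}^n$, and a bijection on a finite set has no tail, so every orbit is purely periodic. For the converse, I would argue by contrapositive: if $c_0 A$ is singular, choose a nonzero left null vector $\mathbf{v}$ of $c_0 A$ and let $b$ be the element of $\mathbb{F}_{q^m}$ with coordinate vector $\mathbf{v}$. Feeding the initial state $(\mathbf{v}, 0, \dots, 0)$ into (\ref{tsr}) gives $\mathbf{s}_n = \mathbf{v}(c_0 A) = 0$, and a short induction then shows $\mathbf{s}_k = 0$ for every $k \geq 1$. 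Since $\mathbf{s}_0 = \mathbf{v} \neq 0$, this sequence has preperiod at least $1$ and hence is not periodic, contradicting the hypothesis.

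No step is a genuine obstacle; the only mildly subtle point is the sharp upper bound $q^{mn}-1$ in part~(i), which is forced by separating out the fixed point at $\mathbf{0}$. Everything else is elementary linear algebra on the state space.
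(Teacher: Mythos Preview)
The paper does not actually prove this proposition; it is quoted from \cite{HPW} without argument, so there is no in-paper proof to compare against. Judged on its own, your proposal is correct: the state-space/pigeonhole argument for (i), together with the observation that $T(\mathbf{0})=\mathbf{0}$ to sharpen the bound to $q^{mn}-1$, is exactly the standard route, and your treatment of (ii) in both directions is sound. Note that the paper itself introduces the same state-transition map $T$ (as the block companion matrix \eqref{typeP}) immediately after stating the proposition, and records the equivalence $T\in\GL_{mn}(\Fq)\Leftrightarrow c_0\neq 0$ and $A\in\GL_m(\Fq)$ in \eqref{nonsingP}, which is precisely the invertibility fact underlying your forward direction of (ii).
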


A TSR of order $n$ over $\mathbb F_{q^m}$ is \emph{primitive} if
for any choice of nonzero initial state, the sequence generated
by that TSR is periodic of period  $q^{mn}-1$.

Corresponding to a \emph{tsr-polynomial} $I_mX^n -(c_{n-1}A)X^{n-1}-
\cdots-(c_1A)X-(c_0A) \in M_m(\Fq)[X]$, we can associate a $(m,n)$-block
companion matrix $T \in M_{mn}(\Fq)$ of the following form
\begin{equation} \label{typeP}
T =
\begin {pmatrix}
\mathbf{0} & \mathbf{0} & \mathbf{0} & . & . & \mathbf{0} & \mathbf{0} & c_0A\\
I_m & \mathbf{0} & \mathbf{0} & . & . & \mathbf{0} & \mathbf{0} & c_1A\\
. & . & . & . & . & . & . & .\\
. & . & . & . & . & . & . & .\\
\mathbf{0} & \mathbf{0} & \mathbf{0} & . & . & I_m & \mathbf{0} & c_{n-2}A\\
\mathbf{0} & \mathbf{0} & \mathbf{0} & . & . & \mathbf{0} & I_m & c_{n-1}A
\end {pmatrix},
\end{equation}
where $c_0, c_1, \dots , c_{n-1}\in \Fq$, $A\in M_m(\Fq)$ and 
$\mathbf{0}$
indicates the zero matrix in $M_m(\Fq)$. The set of all such
$(m,n)$-block companion matrices $T$ over $\Fq$ shall be denoted
by $\TSR(m,n,q)$. Using a Laplace expansion or a suitable sequence of
elementary column operations, we conclude that if $T \in \TSR(m,n,q)$
is given by \eqref{typeP}, then $\det T = \pm \det (c_0A)$.
Consequently,
\begin{equation}
\label{nonsingP}
T \in \GL_{mn}(\Fq) \Longleftrightarrow
    c_0 \neq 0 ~~\mbox{and}~~A\in \GL_m(\Fq).
\end{equation}
where $\GL_m(\Fq)$ is the general linear group of all $m \times m$
nonsingular matrices over $\Fq$.

It may be noted that the block companion matrix \eqref{typeP} is the
state transition matrix for the TSR \eqref{tsr}. Indeed, the $k$-th
state $\mathbf{S}_k:=\left(\mathbf{s}_{k}, \mathbf{s}_{k+1}, \dots,
\mathbf{s}_{k+n-1}\right) \in \Fqm^n$ of the TSR (\ref{tsr}) is
obtained from the initial state
$\mathbf{S}_0:=\left(\mathbf{s}_{0}, \mathbf{s}_{1}, \dots,
\mathbf{s}_{n-1}\right) \in \Fqm^n$ by $\mathbf{S}_k = \mathbf{S}_0 T^k$,
for any $k\ge 0$.

In view of Proposition~\ref{tsrulti} and (\ref{nonsingP}), we have
that $T\in \TSR(m,n,q)$ is periodic if and only if $T$ has the
following form
\begin{equation} \label{typeP'}
\begin {pmatrix}
\mathbf{0} & \mathbf{0} & \mathbf{0} & . & . & \mathbf{0} & \mathbf{0} & B\\
I_m & \mathbf{0} & \mathbf{0} & . & . & \mathbf{0} & \mathbf{0} & c_1B\\
. & . & . & . & . & . & . & .\\
. & . & . & . & . & . & . & .\\
\mathbf{0} & \mathbf{0} & \mathbf{0} & . & . & I_m & \mathbf{0} & c_{n-2}B\\
\mathbf{0} & \mathbf{0} & \mathbf{0} & . & . & \mathbf{0} & I_m & c_{n-1}B
\end {pmatrix},
\end{equation}
where $c_1, \dots, c_{n-1}\in \Fq$ and $B\in \GL_m(\Fq)$. In what follows,
we deal with periodic TSRs only, that is, a TSR of the form (\ref{typeP'}).

The following lemma
reduces the calculation of an $mn\times mn$ determinant to an
$m\times m$ determinant.

\begin{lemma}
\label{mntom} \cite{HPW}
Let $T \in \TSR(m,n,q)$ be given as in \eqref{typeP'} and also let
$F(X)\in M_m\left(\Fq[X]\right)$ be defined by
$F(X) := I_m X^n - (c_{n-1}B)X^{n-1} - \cdots - (c_1B) X - B$.
Then the characteristic polynomial of $T$ is equal to
$\det \left(F(X)\right)$.
\end{lemma}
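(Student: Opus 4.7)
The plan is to compute $\det(XI_{mn}-T)$ directly by combining a block row operation with a block row permutation to reduce the matrix to block upper triangular form whose determinant is visibly $\det F(X)$. Regarding $M := XI_{mn} - T$ as an $n \times n$ array of $m \times m$ blocks, its diagonal blocks are $XI_m$ (except the $(n,n)$ block, which is $XI_m - c_{n-1}B$), its subdiagonal blocks are $-I_m$, the entries in the last block column above the diagonal are $-B, -c_1 B, \ldots, -c_{n-2}B$, and all other blocks are zero.

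The key step is the block row operation $R_1 \leftarrow R_1 + \sum_{k=2}^{n} X^{k-1} R_k$, which clears the first $n-1$ block columns of $R_1$ by telescoping: at column $1$ the original $XI_m$ is cancelled by $X\cdot(-I_m)$ from $R_2$, and at column $j$ (for $2 \le j \le n-1$) the $X^{j}I_m$ from $R_j$ is cancelled by the $-X^jI_m$ from $R_{j+1}$. In the last block column the contributions accumulate to
\[
-B + \sum_{k=1}^{n-1}(-c_k B)X^k + X^n I_m = F(X),
\]
so the new $R_1$ is $(0, \ldots, 0, F(X))$.

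I would then cyclically shift this block row to the bottom. The resulting matrix is block upper triangular over $\Fq[X]$, with diagonal blocks $-I_m$ (appearing $n-1$ times) followed by $F(X)$, so its determinant is $(-1)^{m(n-1)} \det F(X)$. Moving the top block of $m$ rows past the remaining $(n-1)m$ rows contributes a further sign of $(-1)^{(n-1)m^2}$, so altogether $\det M = (-1)^{m(n-1)(m+1)} \det F(X) = \det F(X)$, since $m(m+1)$ is always even.

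The only mildly delicate part of the plan is the sign bookkeeping, which has two independent sources (the $\det(-I_m)$ factors and the parity of the row permutation). A cleaner way to dispose of signs altogether is to note that both $\det M$ and $\det F(X)$ are monic polynomials in $X$ of degree $mn$ (the leading term $X^{mn}$ on the right coming from $\det(X^n I_m)$), so an identity of the form $\det M = \pm \det F(X)$ automatically pins down the sign as $+1$.
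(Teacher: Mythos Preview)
Your argument is correct: the block row operation $R_1 \leftarrow R_1 + \sum_{k=2}^{n} X^{k-1} R_k$ does telescope as you describe, the resulting matrix (after cycling the first block row to the bottom) is block upper triangular with diagonal blocks $-I_m,\ldots,-I_m,F(X)$, and your sign count $(-1)^{m(n-1)}\cdot(-1)^{(n-1)m^2}=(-1)^{(n-1)m(m+1)}=1$ is right. Your alternative observation that both sides are monic of degree $mn$ in $X$, so the sign is forced, is a clean shortcut.

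As for comparison with the paper: there is nothing to compare. The paper does not prove this lemma; it simply quotes it from \cite{HPW}. The only nearby hint is the remark, just before \eqref{nonsingP}, that $\det T=\pm\det(c_0A)$ follows from ``a Laplace expansion or a suitable sequence of elementary column operations''; your computation is precisely the characteristic-polynomial analogue of that suggestion, carried out via elementary (block) row operations instead.
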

The following proposition entails that the problem of counting the
number of primitive TSRs is equivalent to the enumeration of certain
block companion matrices.
\begin{proposition} \label{conj}\cite{HPW}
Let $o(T)$ denote the period of the sequence generated by
$T \in \TSR(m,n,q)$. The number of primitive TSRs of order $n$
over $\Fqm$ is equal to the cardinality of the set
$$\left\{T \in \TSR(m,n,q)~:~ T~~\mbox{is of the form}
~~\eqref{typeP'}~~\mbox{and}~~o(T) = q^{mn}-1 \right\}.$$
\end{proposition}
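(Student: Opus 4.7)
\emph{Proof plan.} My strategy is to identify any potentially primitive TSR with a block companion matrix of form~\eqref{typeP'}, translate primitivity into a transitivity statement for $\langle T\rangle$ on $\Fq^{mn}\setminus\{0\}$, and then recognize this as the single numerical condition $o(T)=q^{mn}-1$.

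First, a primitive TSR is by definition periodic on every nonzero initial state; applied to the state $(b,0,\dots,0)$ with $b\neq 0$, Proposition~\ref{tsrulti}(ii) forces $c_0A$ to be nonsingular, so $c_0\neq 0$ and $A\in\GL_m(\Fq)$. Writing $B:=c_0A$ and rescaling $c_i\mapsto c_i/c_0$ for $1\le i\le n-1$ (which leaves the recurrence~\eqref{tsr} unchanged, since only the products $c_iA$ appear), the state-transition matrix $T$ of a primitive TSR takes the shape~\eqref{typeP'} by~\eqref{nonsingP}. Since $T$ records precisely the coefficients $c_0A,c_1A,\dots,c_{n-1}A$ of the recurrence, distinct primitive TSRs produce distinct such matrices, and every matrix of form~\eqref{typeP'} is the state-transition matrix of some TSR. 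The task thus reduces to identifying which matrices in $\TSR(m,n,q)$ of form~\eqref{typeP'} correspond to primitive TSRs.

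Using $\mathbf{S}_k=\mathbf{S}_0T^k$, the period of the sequence generated from any nonzero $\mathbf{S}_0$ is precisely the size of its orbit under the right action of $\langle T\rangle$ on $\Fq^{mn}$. Hence the TSR is primitive exactly when $\langle T\rangle$ acts with a single orbit of size $q^{mn}-1$ on $\Fq^{mn}\setminus\{0\}$, i.e.\ transitively (and automatically freely). Reading $o(T)$ as the order of $T$ in $\GL_{mn}(\Fq)$ (the lcm of all orbit sizes), the implication ``primitive TSR $\Rightarrow o(T)=q^{mn}-1$'' is immediate; for the converse, assume $o(T)=q^{mn}-1$. If the minimal polynomial of $T$ had degree $d<mn$ its multiplicative order would divide $q^d-1<q^{mn}-1$, so the minimal polynomial has degree $mn$, equals the characteristic polynomial and has order $q^{mn}-1$; by the standard bound $\mathrm{ord}(f)\le q^{\deg f}-1$, with equality only for primitive $f$, the characteristic polynomial of $T$ is a primitive polynomial of degree $mn$. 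Therefore $T$ is conjugate over $\Fq$ to multiplication by a generator of $\Fqmn^{\ast}$, which acts on $\Fq^{mn}\setminus\{0\}$ with a single orbit, and every nonzero initial state has period $q^{mn}-1$.

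The main obstacle is this last step: the purely group-theoretic condition $|\langle T\rangle|=q^{mn}-1$ does not \emph{a priori} force a single orbit (an lcm of several smaller orbit sizes could equal $q^{mn}-1$), so the argument genuinely requires the finite-field input that a polynomial over $\Fq$ of degree $mn$ whose multiplicative order attains $q^{mn}-1$ must be primitive.
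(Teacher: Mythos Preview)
The paper does not actually prove this proposition; it is quoted from \cite{HPW} without argument. Your proof is essentially correct and self-contained, and the structure (reduce to form~\eqref{typeP'} via Proposition~\ref{tsrulti}(ii), then translate primitivity into the single orbit condition on $\langle T\rangle$, then characterize this via the order of $T$) is the natural one.

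One small inaccuracy: in the converse direction you write that if the minimal polynomial of $T$ had degree $d<mn$ then its multiplicative order would \emph{divide} $q^d-1$. This is not literally true in general (the unit group of $\Fq[X]/(m(X))$ need not have order dividing $q^d-1$ when $m$ is reducible). What is true, and is all you need, is the inequality: $T$ lies in the unit group of the $q^d$-element ring $\Fq[T]\cong\Fq[X]/(m(X))$, so $\ord(T)\le q^d-1<q^{mn}-1$. With ``divide'' replaced by ``is at most'', the rest of the argument (minimal polynomial equals characteristic polynomial, a degree-$mn$ polynomial of order $q^{mn}-1$ is primitive, hence $T$ is a Singer cycle acting transitively on $\Fq^{mn}\setminus\{0\}$) goes through verbatim.
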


The case $n=1$ follows immediately from \cite[Theorem 7.1]{GSM}.
In this case, the number of primitive TSRs of order one over
$\Fqm$ is given by
$$
  \frac{\left|\GL_m(\Fq)\right|}{(q^m-1)}\frac {\phi(q^m-1)}{m}.
$$
The case $m=1$ is trivial and in this case, the number of primitive
TSRs of order $n$ is given by 
$$\frac {\phi(q^{n}-1)}{n}.$$

However, for general values of $m$ and $n$, the enumeration of
primitive TSRs does not seem to be an easy problem and it still
stands open. Our focus in this paper is on irreducible TSRs.

\section{Irreducible TSRs} \label{TSRorder2}

For a given matrix $P$, let $\psi_P(X)$ denote the characteristic
polynomial of $P$. It follows from Lemma \ref{mntom} that for any
$T\in \TSR(m,n,q)$, the characteristic polynomial of $T$ is given by
\begin{equation} \label{form}
\psi_T(X)=g_T(X)^m\psi_B\left(\frac{X^n}{g_T(X)}\right),
\end{equation}
where $g_T(X)=1+c_1X+\cdots+c_{n-1}X^{n-1}\in \Fq[X]$. It is easy
to note that if $\psi_T(X)$ is irreducible, then so is $\psi_B(X)$,
but the converse is not true in general.

A TSR is \emph{primitive} (or \emph{irreducible}) if its characteristic
polynomial is primitive (or irreducible).
The set of irreducible TSRs is denoted by $\TSRI(m,n,q)$
and the set of monic irreducible polynomials in $\Fq[X]$ of degree
$d$ is denoted by $\mathcal I(d,q)$.
Then the \emph{characteristic map}
$$ \Psi: M_{mn}(\Fq) \longrightarrow \Fq[X]\quad \mbox{defined by}
        \quad\Psi(T):=\det(XI_{mn}-T)$$
if restricted to the set $\TSRI(m,n,q)$ 
yields the map
$$\Psi_I:\TSRI(m,n,q)\longrightarrow \mathcal I(mn,q).$$
It was noted in \cite{Ram} that the map $\Psi_I$ 
is not surjective in general.

The following lemma may be extracted from \cite{GSM} where it
is proved for primitive polynomials in some different context.
However, it still holds true even for irreducible polynomials.
We provide the proof of this lemma for irreducible polynomials
following similar lines as in \cite{GSM}. It turns out that
this may be viewed as an alternative proof of a special case
of \cite[Theorem 2]{Reiner}.
\begin{lemma} \label{NoMatrices}
Let $ \eta: M_{m}(\Fq) \longrightarrow \Fq[X]$ be defined by
$\eta(A):=\det(XI_{m}-A)$. Then, for every $p(X) \in \mathcal
I(m,q)$, we have,
$$\left|\eta^{-1}\left(p(X)\right)\right| =
  \displaystyle \prod_{i=1}^{m-1}(q^m-q^i).$$
\end{lemma}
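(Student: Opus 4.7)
The plan is to recognize the set $\eta^{-1}(p(X))$ as a single conjugacy class in $M_m(\Fq)$ under the action of $\GL_m(\Fq)$ by similarity, then apply the orbit--stabilizer formula and compute the centralizer.

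First, I would observe that when $p(X)\in \mathcal I(m,q)$, any matrix $A\in M_m(\Fq)$ with $\psi_A(X)=p(X)$ automatically has minimal polynomial equal to $p(X)$: the minimal polynomial divides $p(X)$ and must have the same irreducible factors, so by irreducibility it must be $p(X)$ itself. Consequently the rational canonical form of $A$ consists of a single block, namely the companion matrix $C_p$ of $p(X)$. Thus $\eta^{-1}(p(X))$ is precisely the similarity orbit of $C_p$, and
\[
\left|\eta^{-1}(p(X))\right|=\frac{\left|\GL_m(\Fq)\right|}{\left|Z(C_p)\right|},
\]
where $Z(C_p)=\{P\in \GL_m(\Fq):PC_p=C_pP\}$ is the centralizer.

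Next, I would compute $|Z(C_p)|$. A matrix commutes with $C_p$ if and only if it is a polynomial in $C_p$ (this is the standard fact about companion matrices, which are non-derogatory). Therefore the full commuting algebra of $C_p$ in $M_m(\Fq)$ is $\Fq[C_p]$, which, since $p$ is irreducible of degree $m$, is isomorphic to $\Fq[X]/(p(X))\cong \Fqm$. Hence $Z(C_p)=\Fq[C_p]^{\times}\cong \Fqm^{\times}$ has exactly $q^m-1$ elements.

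Finally, using $\left|\GL_m(\Fq)\right|=\prod_{i=0}^{m-1}(q^m-q^i)$, one gets
\[
\left|\eta^{-1}(p(X))\right|=\frac{\prod_{i=0}^{m-1}(q^m-q^i)}{q^m-1}=\prod_{i=1}^{m-1}(q^m-q^i),
\]
as claimed. The main (small) obstacle is the centralizer computation; while the fact that the centralizer of a non-derogatory matrix equals its polynomial algebra is standard, it is the only non-trivial input. Everything else reduces to the orbit--stabilizer principle and the well-known order formula for $\GL_m(\Fq)$.
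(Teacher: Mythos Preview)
Your proof is correct and follows essentially the same route as the paper's: identify $\eta^{-1}(p(X))$ with the $\GL_m(\Fq)$-conjugacy class of the companion matrix $C_p$, apply orbit--stabilizer, and compute the centralizer as $\Fq[C_p]^{\times}\cong \Fqm^{\times}$ using that a non-derogatory (cyclic) matrix has commuting algebra equal to its polynomial algebra. The only cosmetic difference is that the paper phrases the centralizer as $\Fq[C_p]\setminus\{0\}$ rather than $\Fq[C_p]^{\times}$, which coincide since $\Fq[C_p]$ is a field.
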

\begin{proof}
Let us suppose that $T\in M_{m}(\Fq)$ be such that $\eta(T) = p(X)$.
Since $p(X)$ is irreducible, it is also the minimal polynomial
of $T$. The invariant factors of the companion matrix $C$ of
$p(X)$ and $T$ are the same and as a consequence they are similar
(see [\ref {BOUR}, p. VII.32]). It follows that $\eta^{-1}(p(X)) =
\{A^{-1} C A : A \in \GL_m(\Fq)\}$. Thus,
$$ \left|\eta^{-1}(p(X))\right| = \displaystyle
   \frac{\left|\GL_m(\Fq)\right|}{\left|Z(C)\right|},
   \quad \mbox{where}\quad
   Z(C):= \left\{A \in \GL_m(\Fq) : C A = A C \right\}.
$$

Now, $C$ as a linear transformation of $\Fqm\simeq \Fq^m$ is cyclic.
It follows from \cite[Theorem 3.16 and its corollary]{jac} that $Z(C)$
consists only of polynomials in $C$ excluding, however, the zero
polynomial. Thus $Z(C)=\Fq[C] \setminus \{0\}$, where $\Fq[C]$ is
the $\Fq$-algebra of polynomials in $C$.

The map $r(X)\mapsto r(C)$ defines a $\Fq$-algebra homomorphism from
$\Fq[X]$ into 
$\Fq[C]$ with kernel the ideal of $\Fq[X]$ generated by $p(X)$. Hence,
$\Fq[C]$ is isomorphic to $\Fq[X]/\left\langle p(X)\right\rangle$
and so its cardinality is $q^m$. Therefore, $|Z(C) |= q^m-1$, and
this completes the proof since $\left|\GL_m(\Fq)\right| =
\displaystyle \prod_{i=0}^{m-1}(q^m-q^i)$.
\end{proof}
It follows from \eqref{form} and \cite[Theorem $3$]{Ram} that
$f(X) \in \Psi_I\left(\TSRI(m,n,q)\right)$ if and only if $f(X)$
is irreducible and can be uniquely expressed in the form
\begin{equation} \label{irrform}
g(X)^m h\left(\frac{X^n}{g(X)}\right)
\end{equation}
for some monic irreducible polynomial $h(X)\in \Fq[X]$ of degree $m$
with $h(0)\neq 0$ and a not necessarily monic $g(X) \in \Fq[X]$ of
degree at most $n-1$ with $g(0)=1$.

\begin{theorem} \label{NoTSR}
The number of irreducible TSRs of order $n$ over $\Fqm$ is given
by the following
$$\left|\TSRI(m,n,q)\right|
  = \left|\Psi_I\left(\TSRI(m,n,q)\right)\right|
    \displaystyle \prod_{i=1}^{m-1}(q^m-q^i).$$
\end{theorem}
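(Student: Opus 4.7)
The plan is to interpret the formula as saying that the map $\Psi_I$ has all nonempty fibers of the same size $\prod_{i=1}^{m-1}(q^m-q^i)$, and then simply partition $\TSRI(m,n,q)$ according to those fibers. The identity
$$|\TSRI(m,n,q)| = \sum_{f \in \Psi_I(\TSRI(m,n,q))} |\Psi_I^{-1}(f)|$$
will immediately give the claim once the common fiber size is established.

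To compute a fiber, I would fix $f(X) \in \Psi_I(\TSRI(m,n,q))$. By the characterization \eqref{irrform} (which the excerpt attributes to \cite[Theorem~3]{Ram}), there exist \emph{uniquely determined} $g(X) = 1 + c_1 X + \cdots + c_{n-1}X^{n-1}$ of degree at most $n-1$ with $g(0)=1$, and a unique monic irreducible $h(X) \in \mathcal{I}(m,q)$ with $h(0)\ne 0$, such that $f(X) = g(X)^m\, h(X^n/g(X))$. Combining this uniqueness with formula \eqref{form}, I conclude that any $T \in \Psi_I^{-1}(f)$ of the form \eqref{typeP'} must have its coefficients $c_1,\dots,c_{n-1}$ equal to those of $g$ and must have block $B$ satisfying $\psi_B(X) = h(X)$. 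Conversely, every such $T$ lies in the fiber.

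The final step is to count the admissible choices of $B$. Since the $c_i$'s are forced, $B$ is the only free datum, and the condition $h(0) \ne 0$ ensures that any $B$ with $\psi_B = h$ is automatically nonsingular, so the restriction $B \in \GL_m(\Fq)$ imposed by \eqref{nonsingP} is automatic. Applying Lemma~\ref{NoMatrices} to $h \in \mathcal{I}(m,q)$ then yields
$$|\Psi_I^{-1}(f)| = |\eta^{-1}(h)| = \prod_{i=1}^{m-1}(q^m - q^i),$$
independent of $f$. Summing over $f$ in the image produces the stated identity.

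The argument is essentially a fibration count, so there is no real obstacle: the two substantive inputs, namely the uniqueness of the representation \eqref{irrform} and the conjugacy class size computation of Lemma~\ref{NoMatrices}, are already in hand. The only point requiring mild care is checking that the bijection between $T$ and the pair $(g,B)$ is faithful on both sides (that the $c_i$'s read off from $g$ give a valid $T$ of the form \eqref{typeP'}, and that distinct $B$'s produce distinct $T$'s), which is immediate from the definition of the block companion matrix.
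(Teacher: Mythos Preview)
Your proof is correct and follows essentially the same approach as the paper: both arguments fix $f$ in the image of $\Psi_I$, use the uniqueness of the representation \eqref{irrform} to pin down $g$ (hence the $c_i$'s) and $h$, and then invoke Lemma~\ref{NoMatrices} to count the $B$'s with $\psi_B=h$. Your write-up is slightly more explicit about the fibration structure and the automatic nonsingularity of $B$, but the substance is identical.
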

\begin{proof}
Let us assume that $f(X) \in \Psi_I\left(\TSRI(m,n,q)\right)$;
then $f(X)$ can be uniquely expressed in the form \eqref{irrform}.
Moreover, there is $T \in \TSRI(m,n,q)$ such that $\psi_T(X)=f(X)$.
Clearly $g_T(X)=g(X)$ and $\psi_B(X)=h(X)$. The number of such $T$
is equal to the number of possible values of $B$ with $\psi_B(X)=h(X)$.
Since $h(X)$ is irreducible, by Lemma \ref{NoMatrices}, the number
of such $B$ is $\displaystyle \prod_{i=1}^{m-1}(q^m-q^i)$.
\end{proof}
The case $m=1$ is trivial and in this case, the number of irreducible
TSRs of order $n$ is given by,
\begin{equation}
\displaystyle  \frac{1}{n}
     \sum_{d\mid n} \mu \left(d\right)q^{\frac{n}{d}}.
     \end{equation}
In the case $n=1$, the number of irreducible TSRs of order one is
given by
\begin{equation} \label{irrTSR1}
 \displaystyle \frac{1}{m} \prod_{i=1}^{m-1}(q^m-q^i)
     \sum_{d\mid m} \mu\left(d\right)q^{\frac{m}{d}}.
\end{equation}
In view of Theorem \ref{NoTSR}, it is sufficient to enumerate the
polynomials in the set $\Psi_I\left(\TSRI(m,n,q)\right)$ to find
the number of irreducible TSRs. In fact, Ram \cite{Ram} enumerates
TSRs of order two. Moreover, he re-derives a theorem of Carlitz
\cite{Carlitz} about the number of self reciprocal irreducible
monic polynomials of a given degree over a finite field. In this
section, we give a short proof of \cite[Theorem 8]{Ram} using a
generalization due to Ahmadi \cite{Ahmadi} of a result of Carlitz.
\begin{proposition}\label{Car} \cite{Ahmadi}
Let $e(X)=a_1X^2+b_1X+c_1$ and $g(x)=a_2X^2+b_2X+c_2$ be two
relatively prime polynomials in $\Fq[X]$ with $\max\left(\deg(e),
\deg(g)\right)=2$. Also let $\mathcal I(e,g,m,q)$ be the set of
monic irreducible polynomials $h(X)$ of degree $m>1$ over $\Fq$
such that
$${g(X)}^m h \displaystyle \left(\frac{e(X)}{g(X)}\right)$$
is irreducible over $\Fq$. Then
\[
    \left| \mathcal I(e,g,m,q)\right|=
    \begin{cases}
        0 & \text{if $b_1=b_2=0$ and $q$ is even}; \\
        \displaystyle \frac{1}{2m}(q^m-1)
          & \text{if $q$ is odd and $m=2^\ell$, $\ell \geq 1$}; \vspace{0.1cm} \\
        \displaystyle \frac{1}{2m}
        \displaystyle \sum_{d \mid m, \text{$d$ odd}} \mu(d)q^{\frac{m}{d}}
          & \text{otherwise}.
    \end{cases}
\]
\end{proposition}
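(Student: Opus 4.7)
The plan is to reduce the irreducibility of $F_h(X) := g(X)^m h(e(X)/g(X))$ over $\Fq$ to the irreducibility of an associated quadratic over $\Fqm$, and then to enumerate by character sums combined with M\"obius inversion. I would first verify that $F_h$ has degree exactly $2m$: expanding $F_h = \sum_{k=0}^{m} c_k \, e(X)^k g(X)^{m-k}$ (where $h(Y) = Y^m + c_{m-1} Y^{m-1} + \cdots + c_0$) and using irreducibility of $h$ of degree $m > 1$ (so $h(\lambda) \neq 0$ for every $\lambda \in \Fq$ and $h(0) \neq 0$) together with $\max(\deg e, \deg g) = 2$, the leading coefficient of $F_h$ is never zero. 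Similarly, coprimality of $e, g$ forces $g(\beta) \neq 0$ for every root $\beta$ of $F_h$ (else $e(\beta) = 0$ too). Then $\alpha := e(\beta)/g(\beta) \in \overline{\Fq}$ is a well-defined root of $h$, and $\beta$ is a root of the quadratic $Q_\alpha(X) := e(X) - \alpha g(X) \in \Fqm[X]$. Since $\Fq(\beta) \supseteq \Fq(\alpha) = \Fqm$ with $[\Fq(\beta):\Fqm] \leq 2$, we have $[\Fq(\beta):\Fq] \in \{m, 2m\}$; hence $F_h$ is irreducible over $\Fq$ iff $Q_\alpha$ is irreducible over $\Fqm$. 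Summing over Galois conjugates, $|\mathcal I(e,g,m,q)| = N_m / m$, where $N_m$ counts $\alpha \in \Fqm$ of exact degree $m$ over $\Fq$ with $Q_\alpha$ irreducible over $\Fqm$.

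Next, I would split by the parity of $q$ and record the irreducibility criterion for $Q_\alpha$. Writing $Q_\alpha = A(\alpha) X^2 + B(\alpha) X + C(\alpha)$ with $A(\alpha) = a_1 - a_2 \alpha$, $B(\alpha) = b_1 - b_2 \alpha$, $C(\alpha) = c_1 - c_2 \alpha$: for $q$ odd, $Q_\alpha$ is irreducible over $\Fqm$ iff its discriminant $\Delta(\alpha) := B(\alpha)^2 - 4 A(\alpha) C(\alpha)$ is a non-square in $\Fqm$, where $\Delta(X) \in \Fq[X]$ is a non-zero polynomial of degree at most $2$ (non-zero by coprimality of $e, g$). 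For $q$ even with $b_1 = b_2 = 0$, $B \equiv 0$, so $Q_\alpha$ is a perfect square in $\Fqm[X]$ and never irreducible; this yields the first case $N_m = 0$. For $q$ even with $(b_1, b_2) \neq (0, 0)$, the Artin--Schreier criterion applies: $Q_\alpha$ is irreducible iff $\mathrm{Tr}_{\Fqm / \mathbb F_2}\bigl(A(\alpha) C(\alpha) / B(\alpha)^2\bigr) = 1$ whenever $B(\alpha) \neq 0$.

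Finally, for each $k \mid m$, let $M_k$ be the number of $\alpha \in \F_{q^k}$ satisfying the relevant irreducibility criterion, and apply M\"obius inversion $N_m = \sum_{d \mid m} \mu(d) M_{m/d}$. In the odd case, $M_k = \tfrac12(q^k - z_k - S_k)$, where $z_k$ counts zeros of $\Delta$ in $\F_{q^k}$ and $S_k = \sum_{\alpha \in \F_{q^k}} \chi_k(\Delta(\alpha))$ for the quadratic character $\chi_k$ of $\F_{q^k}^{\times}$. Standard evaluations express $S_k$ in closed form via $\chi_k$ applied to the leading coefficient and the discriminant of $\Delta$ (both fixed elements of $\Fq$), the exact shape depending on whether $\deg \Delta$ equals $0$, $1$, or $2$. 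Substituting these and tracking how $\chi_k$ acts on a fixed element of $\Fq$ as the parity of $k$ varies produces the Carlitz-type expression $\frac{1}{2m}\sum_{d \mid m,\, d\text{ odd}} \mu(d) q^{m/d}$ in the generic situation; in the sub-case $m = 2^\ell$, a residual contribution from the divisor $d = m$ (where $m/d = 1$) fails to cancel and yields the modified formula $(q^m - 1)/(2m)$. The non-degenerate even-$q$ case is treated analogously, replacing quadratic multiplicative characters by additive characters of $\F_{q^k}$ through Artin--Schreier theory. The main obstacle is precisely this last step: the character-sum bookkeeping is delicate because the answer simultaneously depends on $\deg \Delta$, the square-class of its own discriminant in $\Fq$, and divisibility properties of $m$; ensuring the various contributions telescope correctly under M\"obius inversion and produce the precise trichotomy in the proposition is the heart of Ahmadi's argument.
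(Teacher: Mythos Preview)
The paper does not prove this proposition at all: it is quoted from Ahmadi \cite{Ahmadi} as an external input and then used as a black box to derive Theorem~\ref{nequals2}. There is therefore no proof in the paper to compare your proposal against.

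For what it is worth, your outline is the natural route and is essentially the strategy in Ahmadi's paper: reduce irreducibility of $g(X)^m h(e(X)/g(X))$ to irreducibility of $e(X)-\alpha g(X)$ over $\Fqm$ for a root $\alpha$ of $h$, then count the admissible $\alpha$ via the quadratic-residue (odd $q$) or Artin--Schreier (even $q$) criterion and M\"obius inversion. The only place I would be more careful is your explanation of the special case $q$ odd and $m=2^\ell$: the discrepancy between $(q^m-1)/(2m)$ and the generic formula $\frac{1}{2m}\sum_{d\mid m,\ d\text{ odd}}\mu(d)q^{m/d}$ (which for $m=2^\ell$ equals $q^m/(2m)$) is the single term $-1/(2m)$, and it arises not from the divisor $d=m$ but from a correction at the bottom of the subfield tower coming from the character-sum evaluation at level $k=1$; your wording suggests the wrong divisor is responsible. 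This is a presentational issue rather than a genuine gap.
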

We use the above proposition to give a short proof of \cite[Theorem 8]{Ram}
to count the number of irreducible TSRs of order two over $\Fqm$.
\begin{theorem}
\label{nequals2} For $m>1$, we have,
\[
    \left| \Psi_I\left(\TSRI(m,2,q)\right) \right|=
    \begin{cases}
       \displaystyle \frac{q}{2m} (q^m-1)
       & \text{if $q$ is odd and $m=2^\ell$}; \vspace{0.1cm} \\
       \displaystyle \frac{q}{2m}
       \displaystyle \sum_{d \mid m, \text{$d$ odd}} \mu(d)q^{\frac{m}{d}}
       &\text{if $q$ is odd and $m=2^{\ell}k$,} \vspace{-0.35cm} \\
       &\text{and $k \ge 3$ is odd};\\
       \displaystyle \frac{q-1}{2m}
       \displaystyle \sum_{d \mid m, \text{$d$ odd}} \mu(d)q^{\frac{m}{d}}
       &\text{otherwise}.
    \end{cases}
\]
\end{theorem}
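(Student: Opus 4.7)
The plan is to reduce the problem directly to Ahmadi's generalization of Carlitz's theorem (Proposition~\ref{Car}), and then execute a case analysis according to the parity of $q$ and the $2$-adic structure of $m$. By \eqref{irrform}, every element of $\Psi_I\left(\TSRI(m,2,q)\right)$ admits a unique representation $g(X)^m h\bigl(X^2/g(X)\bigr)$ where $h\in\Fq[X]$ is monic irreducible of degree $m$ (so $h(0)\neq 0$ automatically since $m>1$), and $g(X)\in\Fq[X]$ has $\deg g\le 1$ with $g(0)=1$; that is, $g(X)=1+cX$ for a unique $c\in\Fq$. I would therefore parameterise the count by $c\in\Fq$, letting $N_c$ denote the number of monic irreducible $h$ of degree $m$ for which $(1+cX)^m\,h\bigl(X^2/(1+cX)\bigr)$ is irreducible over $\Fq$, and write $\left|\Psi_I(\TSRI(m,2,q))\right|=\sum_{c\in\Fq}N_c$.

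Next I would apply Proposition~\ref{Car} with $e(X)=X^2$ and $g(X)=1+cX$. These are coprime because $g(0)=1$, so $X\nmid g(X)$; and $\max(\deg e,\deg g)=2$. In the notation of the proposition, $b_1=0$ (the coefficient of $X$ in $X^2$) and $b_2=c$, so the three regimes of Proposition~\ref{Car} become: if $q$ is even and $c=0$ then $N_0=0$; if $q$ is odd and $m$ is a power of $2$, then $N_c=(q^m-1)/(2m)$ for every $c\in\Fq$; and in every remaining case one has $N_c=\frac{1}{2m}\sum_{d\mid m,\,d\text{ odd}}\mu(d)q^{m/d}$, independently of $c$.

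Summing $N_c$ over $c\in\Fq$ then yields the three branches of the statement: when $q$ is odd and $m=2^\ell$, the total is $q\cdot(q^m-1)/(2m)=\frac{q}{2m}(q^m-1)$; when $q$ is odd with $m=2^\ell k$ for $k\ge 3$ odd, the total is $\frac{q}{2m}\sum_{d\mid m,\,d\text{ odd}}\mu(d)q^{m/d}$; and when $q$ is even only the $q-1$ nonzero values of $c$ contribute, giving $\frac{q-1}{2m}\sum_{d\mid m,\,d\text{ odd}}\mu(d)q^{m/d}$, which is the ``otherwise'' case. The principal hypothesis of Proposition~\ref{Car} to verify, namely $\gcd(X^2,1+cX)=1$, is immediate, and the uniqueness clause in \eqref{irrform} ensures that the sum over $c$ involves no double counting. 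The main obstacle is essentially bookkeeping: correctly matching the three regimes of Proposition~\ref{Car} to the three cases of the conclusion, and confirming that $N_c$ is indeed independent of $c$ in the non-vanishing regimes.
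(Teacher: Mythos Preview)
Your proof is correct and follows essentially the same approach as the paper: both parameterize by the single free coefficient of $g(X)=1+cX$, apply Proposition~\ref{Car} with $e(X)=X^2$ to evaluate each $N_c$ (the paper's $|\mathcal I_m(a)|$), and sum over $c\in\Fq$, noting that the $c=0$ term vanishes precisely when $q$ is even. Your write-up is slightly more explicit about verifying coprimality and invoking uniqueness from \eqref{irrform} to preclude double counting, but the argument is the same.
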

\begin{proof}
For every $a \in \Fq$, let $\mathcal I_m(a)$ denote the set of monic
irreducible polynomials $h(X)$ of degree $m>1$ over $\Fq$ such that
$${(aX+1)}^m h \displaystyle \left(\frac{X^2}{aX+1}\right)$$
is irreducible over $\Fq$.
A direct application of Proposition \ref{Car} for $e(X)=X^2$ and
$g(X)=aX+1$ yields
\[
   \left|\mathcal I_m(a)\right|=
   \begin{cases}
       0 & \text{if $a=0$ and $q$ is even}; \\
         \displaystyle \frac{1}{2m}(q^m-1)
         & \text{if $q$ is odd and $m=2^\ell$, $\ell \geq 1$}; \vspace{0.1cm} \\
         \displaystyle \frac{1}{2m}
         \displaystyle \sum_{d \mid m, \text{$d$ odd}} \mu(d)q^{\frac{m}{d}}
         &\text{otherwise}.
    \end{cases}
\]
In view of (\ref{irrform}), the proof is complete after the
following observation
\[
   \left| \Psi_I\left(\TSRI(m,2,q)\right)\right|
         =\displaystyle \sum_{a\in \Fq}\left|\mathcal I_m(a) \right|=
 \begin{cases}
        \displaystyle \ |\mathcal I_m(1)|q & \text{if $q$ is odd}; \\
        \displaystyle \ |\mathcal I_m(1)|(q-1) & \text{if $q$ is even}.
 \end{cases}
\]
\end{proof}

Combining Theorem~\ref{NoTSR} and Theorem~\ref{nequals2}, we give
an alternative proof of Theorem 8 in \cite{Ram}.
\begin{theorem}
\label{irrTSR2}
For $m>1$, the number of irreducible TSRs of order two over
$\Fqm$ is given by
\[
 \left| \TSRI(m,2,q)\right|=
 \begin{cases}
        \displaystyle \frac{q}{2m}
        \displaystyle \prod_{i=0}^{m-1}(q^m-q^i)
        & \text{if $q$ is odd and $m=2^\ell$}; \\
        \displaystyle \frac{q}{2m}
        \displaystyle \prod_{i=1}^{m-1}(q^m-q^i)
       \displaystyle \sum_{d \mid m, \text{$d$ odd}} \mu(d)q^{\frac{m}{d}}
       &\text{if $q$ is odd, $m=2^{\ell}k$,} \vspace{-0.35cm}\\
           &   \text{and $k\ge 3$ is odd};\\
        \displaystyle \frac{q-1}{2m} 
        \displaystyle \prod_{i=1}^{m-1}(q^m-q^i)
        \displaystyle \sum_{d \mid m, \text{$d$ odd}} \mu(d)q^{\frac{m}{d}}
        &\text{otherwise}.
           \end{cases}
\]
\end{theorem}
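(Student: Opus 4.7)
The plan is to combine Theorem~\ref{NoTSR} with Theorem~\ref{nequals2}, specialized to $n = 2$. Theorem~\ref{NoTSR} already reduces the enumeration of irreducible TSRs to counting the image $\Psi_I(\TSRI(m,n,q))$ and multiplying by the fiber size $\prod_{i=1}^{m-1}(q^m - q^i)$; by Lemma~\ref{NoMatrices} this fiber size is precisely the number of matrices $B \in M_m(\Fq)$ with any prescribed irreducible characteristic polynomial of degree $m$. Thus the starting point of the proof is the identity
$$|\TSRI(m,2,q)| = \left|\Psi_I\left(\TSRI(m,2,q)\right)\right| \cdot \prod_{i=1}^{m-1}(q^m - q^i),$$
after which the task is reduced to substituting the three formulas for $|\Psi_I(\TSRI(m,2,q))|$ provided by Theorem~\ref{nequals2}.

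For the second case ($q$ odd, $m = 2^\ell k$ with $k \ge 3$ odd) and the third case (all remaining $m > 1$), substitution is mechanical and reproduces the stated expressions verbatim, with no algebraic manipulation needed. For the first case ($q$ odd and $m = 2^\ell$), where $|\Psi_I(\TSRI(m,2,q))| = \frac{q}{2m}(q^m - 1)$, one small simplification is required, namely
$$(q^m - 1) \prod_{i=1}^{m-1}(q^m - q^i) = \prod_{i=0}^{m-1}(q^m - q^i),$$
which follows from the identity $q^m - 1 = q^m - q^0$ and merely extends the product index to begin at $i = 0$; this is needed to match the form of the product displayed in the statement.

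There is no substantive obstacle in this argument: the theorem is essentially a packaging corollary of the two prior results. The only point to watch is the index shift in the first case, where the factor coming from $|\Psi_I(\TSRI(m,2,q))|$ must be recognized as the missing $i=0$ term of the $\GL_m(\Fq)$-order product $\prod_{i=0}^{m-1}(q^m - q^i)$. Once this is noted, the proof is a three-line case analysis and concludes by direct computation.
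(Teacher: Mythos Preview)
Your proposal is correct and matches the paper's approach exactly: the paper simply states that the result follows by combining Theorem~\ref{NoTSR} and Theorem~\ref{nequals2}, and your write-up spells out precisely that combination, including the index shift $(q^m-1)\prod_{i=1}^{m-1}(q^m-q^i)=\prod_{i=0}^{m-1}(q^m-q^i)$ in the first case.
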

\section{Asymptotic analysis of the number of irreducible TSRs of order two}
\label {Asymptotic}
Although we already know the explicit formula for the number of irreducible
TSRs of order two.
However, in this section we will be doing the asymptotic analysis for the
number of irreducible TSRs of order two by using some results due to
Cohen \cite{Cohen2}. For the convenience of the reader, we recall
here some notation and a theorem of Cohen about the distribution
of polynomials over finite fields \cite{Cohen2}.

Let $e, g \in \Fq[X]$ be monic relatively prime polynomials satisfying the following conditions:
\begin{enumerate}
  \item $n=\deg e > \deg g \geq 0$;
  \item $\displaystyle \frac{e(X)}{g(X)} \neq
        \frac{e_1(X^p)}{g_1(X^p)}$ for any $e_1, g_1 \in \Fq[X]$.
\end{enumerate}
Further, let $\G^{e,g}$ be the Galois group of $e(X)-tg(X)$
over $\displaystyle\Fqm(t)$, where $t$ is an indeterminate,
with splitting field $K$. We regard $\G^{e,g}$ as a subgroup
of $\mathcal S_n$, the $n$th symmetric group. Let
$\G^{e,g}_{\lambda}$ be the set of elements of $\G^{e,g}$
having the same cycle pattern $\lambda$. For any
$\sigma \in \G^{e,g}$, let $K_{\sigma}$ denote the subfield
of $K$ fixed under $\sigma$.

Moreover, let $\Fqm^{'} (=\displaystyle \mathbb F_{{(q^m)}^s}$
for some $s \geq 1$) be the largest algebraic extension of
$\Fqm$ in $K$. Let $\widehat{\G}{}^{e,g}=
\{\sigma \in \G^{e,g}~:~K_{\sigma}\cap \Fqm^{'}=\Fqm\}$ and put
${\widehat {\G}}{}^{e,g}_{\lambda}=
\widehat \G{}^{e,g} \cap \G^{e,g}_{\lambda}$ for any cycle
pattern $\lambda$. We note that $\sigma \in \widehat{\G}{}^{e,g}$
if and only if $K_{\sigma} \cap \Fqm^{'}(t)=\Fqm(t)$.

With these notations, we recall a lemma that is used
in the sequel \cite[Lemma 1]{Cohen2}.
\begin{lemma} \label{card} \cite{Cohen2}
With the notation as above, we have
$$ \displaystyle \left|\widehat{\G}{}^{e,g}\right| = \frac{\phi(s)}{s}\left|\G^{e,g}\right|,$$
where $\phi$ is Euler's totient function.
\end{lemma}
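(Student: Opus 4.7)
The plan is to reduce the lemma to counting generators in a cyclic group via the natural restriction map from $\G^{e,g}$ onto $\operatorname{Gal}(\Fqm^{'}/\Fqm)$. Since $\Fqm^{'}/\Fqm$ is an extension of finite fields of degree $s$, $\operatorname{Gal}(\Fqm^{'}/\Fqm)$ is cyclic of order $s$, generated by the Frobenius $x\mapsto x^{q^m}$. The constants extension $\Fqm^{'}(t)/\Fqm(t)$ remains Galois with the same Galois group and sits inside $K/\Fqm(t)$; standard Galois theory then supplies a surjective restriction homomorphism
\[
\rho:\G^{e,g}=\operatorname{Gal}(K/\Fqm(t))\longrightarrow \operatorname{Gal}(\Fqm^{'}(t)/\Fqm(t))\cong \operatorname{Gal}(\Fqm^{'}/\Fqm),
\]
whose kernel is $\operatorname{Gal}(K/\Fqm^{'}(t))$, so every fibre of $\rho$ has cardinality $|\G^{e,g}|/s$.

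Next I would translate the defining condition of $\widehat{\G}{}^{e,g}$ into a condition on $\rho(\sigma)$. For any $\sigma\in \G^{e,g}$ and any $x\in \Fqm^{'}$, one has $\sigma(x)=x$ if and only if $\rho(\sigma)(x)=x$; hence
\[
K_\sigma\cap \Fqm^{'}=(\Fqm^{'})^{\langle \rho(\sigma)\rangle}.
\]
By the Galois correspondence for the cyclic group $\operatorname{Gal}(\Fqm^{'}/\Fqm)$ of order $s$, this fixed field equals $\Fqm$ precisely when $\rho(\sigma)$ generates $\operatorname{Gal}(\Fqm^{'}/\Fqm)$. Consequently $\widehat{\G}{}^{e,g}=\rho^{-1}(\Gamma)$, where $\Gamma$ is the set of generators of a cyclic group of order $s$.

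A cyclic group of order $s$ has exactly $\phi(s)$ generators, and each fibre of $\rho$ has $|\G^{e,g}|/s$ elements, so
\[
|\widehat{\G}{}^{e,g}|=\phi(s)\cdot\frac{|\G^{e,g}|}{s}=\frac{\phi(s)}{s}|\G^{e,g}|,
\]
as claimed. The only nontrivial checks are the surjectivity of $\rho$ and the identification of $K_\sigma\cap \Fqm^{'}$ with the fixed field of $\langle \rho(\sigma)\rangle$; both follow from the fact that $\Fqm^{'}(t)$ is a Galois subextension of $K/\Fqm(t)$ whose Galois group canonically coincides with that of the finite-field extension $\Fqm^{'}/\Fqm$. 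No input beyond standard Galois theory and elementary finite-field generalities is required.
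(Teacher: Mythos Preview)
The paper does not actually supply a proof of this lemma; it merely quotes it as \cite[Lemma 1]{Cohen2} and moves on. Your argument is correct and is essentially the standard one: restrict $\G^{e,g}$ to the constant-field subextension $\Fqm^{'}(t)/\Fqm(t)$, identify the image with the cyclic group $\operatorname{Gal}(\Fqm^{'}/\Fqm)$ of order $s$, observe that $\sigma\in\widehat{\G}{}^{e,g}$ precisely when its image generates that cyclic group, and count the $\phi(s)$ generators times the common fibre size $|\G^{e,g}|/s$. This is exactly the reasoning behind Cohen's original Lemma~1, so there is nothing to contrast.
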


It is also mentioned in \cite{Cohen2} that if $\widehat{\G}{}^{e,g}$
is isomorphic to the symmetric group $\mathcal S_n$ and $\lambda$
is a cycle of order $n$, then
\begin{equation} \label{Sn}
\frac{\left|\widehat{\G}{}^{e,g}_{\lambda}\right|}{\left|\widehat{\G}{}^{e,g}\right|}=\frac{1}{n}.
\end{equation}

 Throughout this section, all the constants implied by $O$-terms
depend only on $n=\displaystyle \deg \left(e(X)-tg(X)\right)$.
\begin{proposition} \label{asymp}\cite{Cohen2}
Let $e, g \in \Fq[X]$ be as stated above.
Also let $\mathcal I(e,g,m,q)$ be the set of
monic irreducible polynomials $h(X)$ of degree $m$ over $\Fq$
such that
$${g(X)}^m h \displaystyle \left(\frac{e(X)}{g(X)}\right)$$
is irreducible over $\Fq$. Then
$$\left|\mathcal I(e,g,m,q)\right|=\displaystyle
  \frac{|\widehat{\G}{}^{e,g}_n|}{|\widehat{\G}{}^{e,g}|}
  \frac{q^m}{m}+O\left(q^{\frac{m}{2}}\right).$$
Moreover, when $\widehat{\G}{}^{e,g}=\mathcal S_n$,
$$\left|\mathcal I(e,g,m,q)\right|=\displaystyle
     \frac{1}{mn}q^m+O\left(q^{\frac{m}{2}}\right).$$
\end{proposition}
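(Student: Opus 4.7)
The plan is to reduce the problem to counting $\Fqm$-rational specializations of $e(X)-tg(X)$ with a prescribed factorization pattern, and then to invoke an effective Chebotarev-type theorem for the function field $\Fqm(t)$, whose error term is controlled by Weil's bound on the number of rational points of a curve.

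First I would exploit the identity that already underlies Theorem~\ref{NoTSR}: if $h\in\mathcal I(m,q)$ has roots $\beta,\beta^q,\dots,\beta^{q^{m-1}}\in\Fqm$, then
\begin{equation*}
g(X)^m h\!\left(\frac{e(X)}{g(X)}\right)=\prod_{i=0}^{m-1}\bigl(e(X)-\beta^{q^i}g(X)\bigr),
\end{equation*}
and this product is $\Fq$-irreducible of degree $mn$ iff $e(X)-\beta g(X)$ is $\Fqm$-irreducible of degree $n$ (equivalently for one, hence all, conjugate $\beta$). Let $N_m$ be the number of $\beta\in\Fqm$ for which $e(X)-\beta g(X)$ is $\Fqm$-irreducible. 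Grouping the $\beta$'s that generate $\Fqm/\Fq$ into $m$-element Galois orbits yields $N_m=m\,|\mathcal I(e,g,m,q)|+R$, where $R$ counts $\beta$ lying in a proper subfield of $\Fqm$. Since proper subfields have at most $q^{m/2}$ elements and $n$ is fixed, $R=O(q^{m/2})$.

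Second, to count $N_m$ I would view $F(X,t):=e(X)-tg(X)$ as a polynomial over $\Fqm(t)$ and let $K$ be its splitting field. Hypothesis~$(i)$ makes $F$ of $X$-degree $n$, and hypothesis~$(ii)$ prevents pathological inseparability, so $\G^{e,g}=\mathrm{Gal}(K/\Fqm(t))$ embeds in $\mathcal S_n$ via its action on the roots. For $\beta\in\Fqm$ outside the finite ramified locus, the factorization pattern of $F(X,\beta)$ over $\Fqm$ is given by the cycle pattern of the Frobenius conjugacy class at $\beta$, and $\Fqm$-irreducibility corresponds precisely to an $n$-cycle. The subtlety is that the full field of constants of $K$ is $\Fqm^{'}$, which may properly contain $\Fqm$; a degree-one place $t\mapsto\beta$ of $\Fqm(t)$ admits a Frobenius $\sigma\in\G^{e,g}$ rather than merely in the geometric subgroup exactly when the residue extension does not absorb $\Fqm^{'}$, i.e.\ when $K_\sigma\cap\Fqm^{'}=\Fqm$. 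This is precisely membership in $\widehat{\G}{}^{e,g}$.

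Applying the function-field Chebotarev density theorem (a standard consequence of the Weil bound applied to the Galois cover $K/\Fqm(t)$) to the cycle pattern $\lambda=(n)$ then gives
\begin{equation*}
N_m=\frac{|\widehat{\G}{}^{e,g}_n|}{|\widehat{\G}{}^{e,g}|}\,q^m+O\!\left(q^{m/2}\right),
\end{equation*}
with the implied constant depending only on $n$, since both the number of ramified places and the Weil error term are bounded in terms of the genus of $K/\Fqm^{'}$ and of $[K:\Fqm(t)]$, all of which are controlled by $n$. Dividing by $m$ and combining with the first step yields the stated asymptotic; the special case $\widehat{\G}{}^{e,g}=\mathcal S_n$ then reduces to \eqref{Sn}. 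The main obstacle, beyond a black-box appeal to Weil's bound, is the constant-field bookkeeping in the middle paragraph: one must carefully show that $\Fqm$-rational specializations are parametrised exactly by Frobenius classes in $\widehat{\G}{}^{e,g}$, and verify that the correction factor $\phi(s)/s$ of Lemma~\ref{card} is absorbed correctly into the main term rather than contaminating the error term.
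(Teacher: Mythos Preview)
The paper does not supply a proof of this proposition: it is quoted verbatim as a result of Cohen \cite{Cohen2} and used as a black box. There is therefore nothing in the paper to compare your argument against.

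That said, your sketch is a faithful outline of the method actually used in \cite{Cohen2}. Cohen's proof proceeds exactly by the two reductions you describe: first, the composition $g(X)^m h(e(X)/g(X))$ is irreducible over $\Fq$ if and only if $e(X)-\beta g(X)$ is irreducible over $\Fqm$ for a root $\beta$ of $h$, so that $m|\mathcal I(e,g,m,q)|$ equals the number of such $\beta\in\Fqm$ of exact degree $m$ over $\Fq$, up to an $O(q^{m/2})$ correction for $\beta$ in proper subfields; second, the count of $\beta\in\Fqm$ with $e(X)-\beta g(X)$ irreducible is obtained from the Weil estimates applied to the Galois cover defined by $e(X)-tg(X)$ over $\Fqm(t)$, with the constant-field extension $\Fqm'/\Fqm$ handled precisely through the set $\widehat{\G}{}^{e,g}$ and Lemma~\ref{card}. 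Your identification of the constant-field bookkeeping as the delicate point is accurate; in \cite{Cohen2} this is dealt with by working with the normal subgroup $\mathrm{Gal}(K/\Fqm'(t))$ and its cosets in $\G^{e,g}$.
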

For $e(X)=X^n$ and $g(X)=1+a_1X+\cdots+a_{n-1}X^{n-1}$, we shall
alternatively denote the Galois group $\G^{e,g}$ of $X^n-tg(x) \in
\Fqm(t)[X]$ by $\G^{\bar a}$, where ${\bar a}=(a_1,\ldots,a_{n-1})~\in
\Fq^{n-1}$. Using this notation, we give a formula for the cardinality
of the set $\Psi_I\left(\TSRI(m,n,q)\right)$ and we further prove that
this is indeed an asymptotic formula in some special cases.
\begin{theorem} \label{asymptsr}
Let $m>1$ and $g(X)=1+a_1X+\cdots+a_{n-1}X^{n-1}$. 
Assume $\G^{\bar a}$
is the Galois group of $X^n-tg(X)$ over $\displaystyle \Fqm(t)$, where ${\bar a}=(a_1,\ldots,a_{n-1} )$.
Then for $n>1$, we have
$$ \left| \Psi_I\left(\TSRI(m,n,q)\right) \right|=
  c\frac{q^{m}}{m}
+ O \left(q^{n-1+\frac{m}{2}}\right),$$
where
$c= \displaystyle \sum_{{\bar a}\in \Fq^{n-1}}
    \frac{|\widehat {\G}{}^{\bar a}_n|}{|\widehat{\G}{}^{\bar a}|}$
and for $n=1$, we have
$$ \left| \Psi_I\left(\TSRI(m,n,q)\right) \right|=
   \frac{1}{m}q^{m}
 + O \left(q^{\frac{m}{2}}\right).$$
\end{theorem}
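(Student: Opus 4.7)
The plan is to decompose $|\Psi_I(\TSRI(m,n,q))|$ according to the unique representation in~\eqref{irrform} and apply Proposition~\ref{asymp} termwise. For $n>1$, the first step is to write
\[
|\Psi_I(\TSRI(m,n,q))| = \sum_{\bar{a} \in \Fq^{n-1}} |\mathcal{I}(X^n, g_{\bar{a}}, m, q)|,
\]
where $g_{\bar{a}}(X) = 1 + a_1 X + \cdots + a_{n-1} X^{n-1}$. Uniqueness of the representation ensures there is no double-counting, and since $m>1$ every monic irreducible $h$ of degree $m$ automatically satisfies $h(0)\neq 0$.

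Next, I would verify the hypotheses of Proposition~\ref{asymp} for $e(X)=X^n$ and $g(X)=g_{\bar{a}}(X)$. Coprimality follows from $g_{\bar{a}}(0)=1$, and $n>\deg g_{\bar{a}}$ is immediate. The only delicate hypothesis is condition~(2), which can fail only when $p\mid n$ and $g_{\bar{a}}$ is a polynomial in $X^p$. For such ``bad'' tuples, $g_{\bar{a}}(X)^m h(X^n/g_{\bar{a}}(X))$ is itself a polynomial in $X^p$, and by the perfectness of $\Fq$ it equals $G(X)^p$ for some nonconstant $G\in\Fq[X]$; such a polynomial is never irreducible of degree $mn\geq 2$. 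Consequently $|\mathcal{I}(X^n, g_{\bar{a}}, m, q)|=0$ for bad $\bar{a}$, and the corresponding terms $|\widehat{\G}^{\bar{a}}_n|/|\widehat{\G}^{\bar{a}}|$ in the definition of $c$ should be read as zero.

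For each good $\bar{a}$, Proposition~\ref{asymp} yields
\[
|\mathcal{I}(X^n, g_{\bar{a}}, m, q)| = \frac{|\widehat{\G}^{\bar{a}}_n|}{|\widehat{\G}^{\bar{a}}|}\cdot\frac{q^m}{m} + O\bigl(q^{m/2}\bigr),
\]
with implicit constant depending only on $n$. Summing $q^{n-1}$ such estimates produces the main term $c\,q^m/m$ together with an accumulated error of size $O(q^{n-1}\cdot q^{m/2})=O(q^{n-1+m/2})$, matching the claim. The case $n=1$ is immediate: then $g(X)=1$ is forced, so $\Psi_I(\TSRI(m,1,q))$ coincides with the set of monic irreducibles of degree $m$ over $\Fq$, whose cardinality is $\frac{1}{m}\sum_{d\mid m}\mu(d)q^{m/d}=q^m/m+O(q^{m/2})$. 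The one subtle point of the argument is the bad-tuple analysis, but the perfectness of $\Fq$ dispatches it cleanly.
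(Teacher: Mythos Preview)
Your proof is correct and follows the same strategy as the paper: decompose $|\Psi_I(\TSRI(m,n,q))|$ according to~\eqref{irrform} as a sum over $\bar a\in\Fq^{n-1}$, apply Proposition~\ref{asymp} with $e(X)=X^n$ and $g(X)=g_{\bar a}(X)$ to each summand, and add the $q^{n-1}$ resulting estimates. You are in fact more careful than the paper---which applies Proposition~\ref{asymp} without checking condition~(2) and says nothing about the tuples where it fails---and your treatment of $n=1$ via the explicit formula for $|\mathcal I(m,q)|$ is a harmless variant of the paper's appeal to the trivial case $\widehat{\G}{}^{e,g}=\mathcal S_1$ of Proposition~\ref{asymp}.
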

\begin{proof}
Assume that $n>1$ and for every ${\bar a}=(a_1,\ldots,a_{n-1} ) \in \Fq^{n-1}$,
let $\mathcal I_m({\bar a})$ denote the set of monic irreducible
polynomials $h(X)$ of degree $m>1$ over $\Fq$ such that
$${g(X)}^m h \displaystyle \left(\frac{X^n}{g(X)}\right)$$
is irreducible over $\Fq$, where $g(X)=1+a_1X+\cdots+a_{n-1}X^{n-1}$.
A direct application of Proposition \ref{asymp} with $e(X)=X^n$
and $g(X)=1+a_1X+\cdots+a_{n-1}X^{n-1}$ yields
$$ \left|\mathcal I_m({\bar a}) \right|= \displaystyle
   \frac{|\widehat{\G}{}^{\bar a}_n|}{|\widehat{\G}{}^{\bar a}|}
   \frac{q^m}{m}+O\left(q^{\frac{m}{2}}\right).$$
However, in the particular case when
$\widehat{\G}{}^{\bar a}=\mathcal S_n$, we have
$$ \left| \mathcal I_m(\bar a) \right|=
   \displaystyle \frac{1}{mn}q^m+O\left(q^{\frac{m}{2}}\right).$$
In view of (\ref{irrform}), we have
$$  \left| \Psi_I\left(\TSRI(m,n,q)\right) \right|
  = \displaystyle \sum_{{\bar a}\in \Fq^{n-1}}
    \left|\mathcal I_m({\bar a})\right|
  = c\frac{q^{m} }{m}
  + O \left(q^{n-1+\frac{m}{2}}\right),$$
where $c= \displaystyle \sum_{{\bar a}\in \Fq^{n-1}}
\frac{|\widehat{\G}{}_n^{\bar a}|}{|\widehat{\G}{}^{\bar a}|}$.

For $n=1$, we have $e(X)=X$ and $g(X)=1$. Thus,
$\widehat{\G}{}_1^{e,g}=\widehat{\G}{}^{e,g}=\G^{e,g}=\mathcal S_1$
and in this case, the proof follows from Proposition \ref{asymp}.
\end{proof}
We remark that in the proof of the above theorem, $g(X)$ is
not necessarily a monic polynomial, but we could still apply
Proposition \ref {asymp}.

The following theorem is an easy consequence of Theorem \ref{asymptsr}
and gives a formula for the number of irreducible TSRs.

\begin{theorem} \label{irrTSRAsymp}
Let us suppose that $m>1$. Then the number $\left| \TSRI(m,n,q)\right|$
of irreducible TSRs of order $n>1$ over $\Fqm$ satisfies
$$\left| \TSRI(m,n,q)\right|=c\frac{q^{m}}{m}
  \displaystyle \prod_{i=1}^{m-1}(q^m-q^i)
 + O\left(q^{m^2+n-1-\frac{m}{2}}\right),$$
where
$c= \displaystyle \sum_{{\bar a}\in \Fq^{n-1}}
    \frac{|\widehat {\G}{}^{\bar a}_n|}{|\widehat{\G}{}^{\bar a}|}$.
For $n=1$, we have \vspace{-0.3cm}
$$\left| \TSRI(m,n,q)\right|=\frac{1}{m} q^{m}
  \displaystyle \prod_{i=1}^{m-1}(q^m-q^i)
 + O \left(q^{m^2-\frac{m}{2}}\right).$$
\end{theorem}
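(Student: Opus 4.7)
The plan is to derive this as an immediate consequence of Theorem~\ref{NoTSR} together with the asymptotic formula of Theorem~\ref{asymptsr}, treating the product $\prod_{i=1}^{m-1}(q^m-q^i)$ as a deterministic multiplier and tracking how it inflates the error term.

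First I would invoke Theorem~\ref{NoTSR}, which tells us that
$$\left|\TSRI(m,n,q)\right| \;=\; \left|\Psi_I\!\left(\TSRI(m,n,q)\right)\right|\cdot \prod_{i=1}^{m-1}(q^m-q^i).$$
Substituting the $n>1$ asymptotic from Theorem~\ref{asymptsr}, namely
$\left|\Psi_I(\TSRI(m,n,q))\right| = c\, q^m/m + O(q^{n-1+m/2})$,
and distributing the product over the main term and the error term, the main contribution becomes $c\,(q^m/m)\prod_{i=1}^{m-1}(q^m-q^i)$, which matches the claimed leading term.

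The only non-cosmetic step is the error estimate. Since the implied constant in Theorem~\ref{asymptsr} depends only on $n$, I would bound the product crudely by
$$\prod_{i=1}^{m-1}(q^m-q^i) \;\le\; \prod_{i=1}^{m-1} q^m \;=\; q^{m(m-1)}.$$
Multiplying this by $O(q^{n-1+m/2})$ yields
$$O\!\left(q^{m(m-1)}\cdot q^{n-1+m/2}\right) \;=\; O\!\left(q^{m^2-m+n-1+m/2}\right) \;=\; O\!\left(q^{m^2+n-1-m/2}\right),$$
which is precisely the error term in the statement. The $n=1$ case is handled identically, using the second formula of Theorem~\ref{asymptsr}: the error term becomes $O(q^{m(m-1)}\cdot q^{m/2}) = O(q^{m^2-m/2})$.

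There is no real obstacle here; the whole argument is essentially bookkeeping of exponents. The only point to double-check is that the implied constant in the $O$-term of Theorem~\ref{asymptsr} is uniform in $\bar a \in \Fq^{n-1}$ (so that summing $q^{n-1}$ instances of $O(q^{m/2})$ gives $O(q^{n-1+m/2})$ in Theorem~\ref{asymptsr}, as already recorded there), and that the bound $\prod_{i=1}^{m-1}(q^m-q^i)\le q^{m(m-1)}$ is good enough; a sharper estimate would not improve the stated error exponent $m^2+n-1-m/2$.
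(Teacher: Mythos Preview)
Your proposal is correct and matches the paper's approach exactly: the paper's proof is the single line ``The proof follows immediately from Theorem~\ref{NoTSR} and Theorem~\ref{asymptsr},'' and your bookkeeping with the bound $\prod_{i=1}^{m-1}(q^m-q^i)\le q^{m(m-1)}$ is precisely the omitted arithmetic that turns $O(q^{n-1+m/2})$ into $O(q^{m^2+n-1-m/2})$.
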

\begin{proof}
The proof follows immediately from Theorem \ref{NoTSR} and
Theorem \ref{asymptsr}.
\end{proof}
\begin{remark}
The explicit computation of the constant $c$ in Theorem \ref{asymptsr}
seems a rather difficult problem. Without knowing the behaviour of $c$,
it is not clear if the $c\frac{q^{m}}{m}$ term can be absorbed into the
big Oh term; if this happens, we no longer have an asymptotic formula.
When $m < 2(n-1)$, it is not clear if $cq^m$ is asymptotically bigger
than $q^{n-1+m/2}$. Thus, unless we know the asymptotics of $c$ as a
power of $q$ for large values of $q$, Theorem \ref{asymptsr} does not
give an asymptotic formula for $\left| \Psi_I\left(\TSRI(m,n,q)\right)
\right|$. The same holds true for Theorem \ref{irrTSRAsymp}.

\end{remark}


It is clear that for $n=1$, the first term ($d=1$) in (\ref{irrTSR1})
is exactly the same as the main term in the formula of Theorem
\ref{irrTSRAsymp}. For the case $n=2$, we explicitly compute the
value of $c$ in the following theorem allowing us to compare the
main term in the formula of Theorem \ref{irrTSRAsymp} with the
first term in the formula of Theorem \ref{irrTSR2}. When $n=2$,
we prove that the main terms $c\frac{q^{m}}{m}$ and
$c\frac{q^{m}}{m} \displaystyle \prod_{i=1}^{m-1}(q^m-q^i)$ of
Theorem \ref{asymptsr} and Theorem \ref{irrTSRAsymp}, respectively,
do not get absorbed in the big Oh term.

\begin{theorem} \label{valueofc}
Let $p$ be the characteristic of the field $\Fqm$. For $n=2$,
the value of the constant $c$ in Theorem $\ref{irrTSRAsymp}$
is $\frac{q}{2}$ whenever $p \neq 2$, and $\frac{q-1}{2}$ if $p=2$.
\end{theorem}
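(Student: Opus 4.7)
The plan is to compute, for each admissible $\bar a = (a_1) \in \Fq$, the ratio $|\widehat{\G}^{\bar a}_2|/|\widehat{\G}^{\bar a}|$ associated with the polynomial $F(t,X) := X^2 - t(1+a_1X) \in \Fqm(t)[X]$, and then sum over $a_1$ to obtain $c$. Because $n=2$ is small, the Galois group is forced into either $\mathcal{S}_2$ or the trivial group, so the computation reduces to checking irreducibility of $F(t,X)$ over $\Fqm(t)$ and then verifying that the splitting field has $\Fqm$ as its constant subfield.

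I would first identify the set of admissible $a_1$. Condition (1) of Section \ref{Asymptotic} is automatic since $\deg_X X^2 = 2 > \deg(1+a_1X)$. Condition (2) asks that $X^2/(1+a_1X) \neq e_1(X^p)/g_1(X^p)$, and this fails precisely when $p=2$ and $a_1=0$, for then $X^2 = (X)^2$. Thus all $q$ values of $a_1$ are admissible if $p\neq 2$, while only the $q-1$ nonzero values are admissible if $p=2$. I would then show $\G^{\bar a} \cong \mathcal{S}_2$ for every admissible $a_1$. In odd characteristic the discriminant of $F(t,X)$ is $t(a_1^2t+4)$, which is square-free in $\Fqm[t]$ and therefore not a square in $\Fqm(t)$; hence $F$ is separable and irreducible. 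In characteristic $2$ with $a_1\neq 0$, the substitution $X = ta_1Y$ turns $F(t,X)=0$ into the Artin--Schreier form $Y^2 + Y = 1/(ta_1^2)$, whose right-hand side has a simple pole at $t=0$ and so does not lie in the image of $\wp(Z):=Z^2+Z$ on $\Fqm(t)$; thus $F$ is again separable and irreducible.

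Next I would verify that the constant field $\Fqm'$ of the splitting field $K$ equals $\Fqm$ in all admissible cases. In odd characteristic, $K = \Fqm(t,\sqrt{t(a_1^2t+4)})$; since the radicand is a square-free polynomial in $t$, it cannot be written as a constant times a square in $\Fqm(t)$, so $K/\Fqm(t)$ is geometric. In characteristic $2$, an equation $1/(ta_1^2) = c + (g^2+g)$ with $c\in\Fqm$ and $g\in\Fqm(t)$ would force the right-hand side to have only poles of even order at $t=0$, contradicting the simple pole on the left; hence the Artin--Schreier extension is geometric and $\Fqm'=\Fqm$. Therefore $s=1$ in Lemma \ref{card}, $\widehat{\G}^{\bar a} = \G^{\bar a} \cong \mathcal{S}_2$, and equation \eqref{Sn} yields $|\widehat{\G}^{\bar a}_2|/|\widehat{\G}^{\bar a}| = 1/2$ for every admissible $a_1$. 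Summing $1/2$ over the admissible values then gives $c = q/2$ when $p \neq 2$ and $c = (q-1)/2$ when $p = 2$.

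The main obstacle is the constant-field verification in characteristic $2$: one must carry out the pole-order argument carefully to rule out hidden constant extensions inside the Artin--Schreier tower. A less self-contained shortcut would be to compare the leading terms of the exact formula in Theorem \ref{nequals2} with the asymptotic in Theorem \ref{asymptsr}, from which the claimed value of $c$ can be read off by inspection once $q$ is large relative to $m$; however, that route bypasses the Galois-theoretic content which Theorem \ref{valueofc} is meant to illustrate and would not cover the regime in which the asymptotic error term is comparable to $cq^m/m$.
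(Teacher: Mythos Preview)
Your proposal is correct and follows essentially the same approach as the paper: for each admissible $a\in\Fq$ show $\G^{\bar a}\cong\mathcal S_2$, verify $s=1$ so that $\widehat{\G}{}^{\bar a}=\G^{\bar a}$, apply \eqref{Sn} to obtain a contribution of $1/2$, and sum. The only differences are in technical detail: you establish irreducibility and geometricity via the discriminant (odd $p$) and an Artin--Schreier pole-order argument ($p=2$), whereas the paper uses the bare degree argument $[K:\Fqm(t)]=2$ together with irreducibility over $\Fqm'(t)$; and the paper treats the case $p=2$, $a=0$ by observing directly that $\G^{0}$ is trivial (so $|\widehat{\G}{}^{0}_2|=0$) rather than excluding it as inadmissible under condition~(2), but the effect on $c$ is identical.
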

\begin{proof}
For $n=2$, we have $e(X)=X^2$, $g(X)=aX+1$, and ${\bar a}=a \in \Fq$.
We consider two different cases depending upon the characteristic
$p$ of the field $\Fqm$.

{\bf Case $1$:}  Suppose $ p \neq 2$. Then for each ${\bar a}=a$
in $\Fq$, $X^2-t(aX+1)$ is irreducible and separable over
$\Fqm(t)$ and thus $\G^{\bar a}=\mathcal S_2$.

Let $K$ be splitting field of $X^2-t(aX+1)$ over $\Fqm(t)$
and let $\Fqm^{'} (=\mathbb F_{{(q^m)}^s}$ for some $s \geq 1$)
be the largest algebraic extension of $\Fqm$ in $K$. We have
$\Fqm(t) \subseteq \Fqm^{'}(t) \subseteq K$. Since
$[K:\Fqm(t)]=2$, $\Fqm^{'}(t)$ is either equal to $K$ or
$\Fqm(t)$. But the irreducibility of the polynomial $X^2-t(aX+1)$
over $\Fqm^{'}(t)$ ensures that   $\Fqm^{'}(t) \neq K$.
Therefore $\Fqm^{'}(t)=\Fqm(t)$ and hence, $s=1$. Thus
using Lemma \ref {card}, we have
$\widehat {\G}{}^{\bar a}=\G^{\bar a}=\mathcal S_2$.
Now by using \eqref{Sn}, we obtain
$$c= \displaystyle \sum_{{ \bar a=a}\in \Fq}
     \frac{|\widehat {\G}{}^{\bar a}_2|}
         {|\widehat{\G}{}^{\bar a}|}=\frac{q}{2}.$$

{\bf Case $2$:} Suppose $p=2$. Then for each ${\bar a}=a \neq 0$
in $\Fq$, $X^2-t(aX+1)$ is irreducible and separable over
$\Fqm(t)$ and thus $\G^{\bar a}=\mathcal S_2$. Following
similar arguments as before, we deduce that  for ${\bar a} = a \neq 0$,
$\widehat {\G}{}^{\bar a}=\G^{\bar a}=\mathcal S_2$.

However, when ${\bar a} = a =0$, the polynomial $x^2-t$ is
irreducible, but  not separable over $\Fqm(t)$. Thus,
$\widehat {\G}{}^{0}=\G^{0}=\mathcal A_2$ and hence
$\left|\widehat{\G}{}^{0}_2\right|=0$. Again Equation
\eqref{Sn} yields
$$c= \displaystyle \sum_{{ \bar a=a}\in \Fq}
     \frac{|\widehat {\G}{}^{\bar a}_2|}{|\widehat{\G}{}^{\bar a}|}
   = \displaystyle \sum_{{ \bar a=a \neq 0}\in \Fq}
     \frac{|\widehat {\G}{}^{\bar a}_2|}{|\widehat{\G}{}^{\bar a}|}
   = \frac{q-1}{2}.$$ 
\end{proof}

\section{An asymptotic formula for the number of irreducible TSRs of any order when $q$ is odd}
\label{Asymptoticodd}
In this section, we prove an asymptotic formula for the number of irreducible
TSRs of any order when $q$ is odd by using some previous results due to Cohen \cite{Co99}.

It may be noted that $f$ is necessarily monic  of degree $mn$ in
\eqref{irrform} and $f(0)=h(0) \neq 0$. Its (monic) reciprocal is
$f^*(X)= X^{\deg f}f(1/X)/f(0)$. Of course, $f$ is irreducible if
and only if $f^*$ is irreducible. From \eqref{irrform}
\begin{equation} \label{fgh}
 f^*(X)=X^{mn}g(1/X)^m h\left(\frac{1}{X^ng(1/X)}\right)/f(0)=h^*(\bar{g}^*(X)),
\end{equation}
since $f(0)=h(0)\neq 0$ and $\bar{g}(X) = X g(X)$. Thus, from now on,
if we replace $g$ by the reciprocal $X^n+a_1X^{n-1}+ \cdots + a_{n-1}X$
of $\bar{g}$, we have that $M(m,n,q):=|\Psi_I(\mathrm{TSRI}(m, n, q))|$
is the number of irreducible polynomials in $\mathbb{F}_q[X]$ of the form
$h(g(X))$, where $h$ is a monic polynomial of degree $m$ (necessarily
irreducible) and $g$ is a monic polynomial of degree $n$ (with $g(0)=0$),
as described.  Suppose $ \alpha$ is a root in $\mathbb{F}_{q^m}$ of a
monic irreducible polynomial $h(X) \in \mathbb{F}_q[X]$ of degree $m$.
Then $h(g(X))$ is irreducible in $\mathbb{F}_q[X]$ if and only if
$g(X)-\alpha$ is irreducible in $\mathbb{F}_{q^m}[X]$. Hence $mM(m,n,q)$
is sum over all $(n-1$)-tuples $\bar{a}$ of the number of
$\alpha \in \mathbb{F}_{q^m}$, not in a proper subfield, such that
$g(X)-\alpha$ is irreducible in  $\mathbb{F}_{q^m}$.

When $m=1$, then  $M(1,n,q)$ is simply the number of irreducible
polynomials of degree $n$ over $\mathbb{F}_q$, given by the well-known
formula. So suppose $m>1$ and define  $N(m,n,q)$ to be the sum over
$\bar{a}$ of  the total number of $\alpha \in \mathbb{F}_{q^m}$ such
that $g(X)- \alpha$ is irreducible in $\mathbb{F}_{q^m}$. Then
 \begin{equation} \label{MN}
  N(m,n,q)=mM(m,n,q)+O(q^{n-1+m/2}).
  \end{equation}

Let $K_q$ be the algebraic closure of the field $\mathbb{F}_q$ (and so
of $\mathbb{F}_{q^m}$). Let $F(X)=g(X)-t$, where $t$ is an indeterminate.
For given $\bar{a}$, $\G^{\bar{a}}$ denotes the Galois group of $g(X)-t$
over $\mathbb{F}_{q^m}(t)$, where $t$ is an indeterminate. It has as a
normal subgroup $\widehat{\G}{}^{\bar{a}}$, the Galois group of $g(X)-t$
over $K_q(t)$.  An important criterion for $\widehat{\G}{}^{\bar{a}}$ to be
the full symmetric group $\mathcal S_n$ derives from Theorem 4.8 of \cite{Co99}.

\begin{lemma} \label{GalSn}
Let $g(X) \in \mathbb{F}_q[X]$ be monic of degree $n$ and indecomposable
over $\mathbb{F}_q$ (i.e, $g$ is not a composition
$g=g_1(g_2)$ of polynomials $g_1(X), g_2(X) \in \mathbb{F}_q[X]$, where $\deg(g_i) \geq 2,\ i=1,2$). Suppose that, for
some $\theta \in K_q$, $g(X)-\theta$ factorizes over $K_q$ as
$(X-\beta)^2E(X)$ for some square-free polynomial $E$ (with
$E(\beta) \neq 0$). Then the Galois group of $g(X)-t$ over $K_q(t)$
is $\mathcal S_n$.
\end{lemma}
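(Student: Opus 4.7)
The plan is to identify $\widehat{\G}{}^{\bar{a}}$ with a transitive subgroup of $\mathcal{S}_n$ via its permutation action on the $n$ roots of $g(X)-t$ in an algebraic closure of $K_q(t)$, and then invoke the classical theorem of Jordan that a primitive subgroup of $\mathcal{S}_n$ containing a transposition must be all of $\mathcal{S}_n$. Transitivity is automatic since $g(X)-t$ has degree one in $t$ and is therefore irreducible over $K_q(t)$. So two things must be established: primitivity of $\widehat{\G}{}^{\bar{a}}$, and the existence of a transposition in $\widehat{\G}{}^{\bar{a}}$.

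For primitivity I would argue the contrapositive. By the standard block-subgroup dictionary of transitive permutation groups, a non-trivial system of blocks of $\widehat{\G}{}^{\bar{a}}$ on the roots of $g(X)-t$ corresponds to a proper intermediate field $K_q(t) \subsetneq L \subsetneq K_q(\alpha)$, where $\alpha$ is a chosen root. Since $g(\alpha)=t$ is transcendental over $K_q$, so is $\alpha$, and hence $K_q(\alpha)$ is a rational function field. Lüroth's theorem together with the Ritt correspondence then produce polynomials $g_1,g_2 \in K_q[X]$ with $\deg g_i \ge 2$ and $g = g_1 \circ g_2$. A standard Galois descent argument on the coefficients (using that the decomposition is essentially unique up to a linear change of variables) forces such a decomposition to exist already over $\Fq$, contradicting the indecomposability hypothesis on $g$.

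For the transposition I would analyze the inertia group at the place $t = \theta$ of $K_q(t)$. The specialization $g(X) - \theta = (X-\beta)^2 E(X)$ with $E$ squarefree and $E(\beta) \neq 0$ shows that, as $t \to \theta$, exactly one pair of roots of $g(X)-t$ collides (the pair specializing to $\beta$), while the other $n-2$ roots remain distinct from each other and from $\beta$. Standard function-field ramification theory then provides an inertia generator at $t = \theta$ which fixes the $n-2$ unramified roots and interchanges the colliding pair; this element is therefore a transposition in $\widehat{\G}{}^{\bar{a}}$. Combining primitivity with the existence of this transposition and Jordan's theorem yields $\widehat{\G}{}^{\bar{a}} = \mathcal{S}_n$.

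The main obstacle, and presumably the delicate content of Theorem 4.8 in \cite{Co99} being invoked, lies in the case of characteristic two: when the ramification index $2$ at $\beta$ equals the residue characteristic, the ramification is wild, the inertia group need not be cyclic of order $2$, and one cannot simply read off a transposition from the tame quotient. Handling this uniformly requires the finer inertia and specialization analysis of Cohen's paper, by which a transposition in $\widehat{\G}{}^{\bar{a}}$ can still be extracted from the hypothesized square factor even in the wild case.
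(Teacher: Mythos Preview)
The paper supplies no proof; the lemma is quoted from Theorem~4.8 of \cite{Co99}. Your outline --- transitivity, primitivity from indecomposability, a transposition from inertia at $t=\theta$, then Jordan --- is precisely the classical argument behind such results. There is, however, a genuine gap in your primitivity step. You need the \emph{geometric} group $\widehat{\G}=\mathrm{Gal}\bigl(g(X)-t/K_q(t)\bigr)$ to be primitive, i.e.\ $g$ indecomposable over $K_q$, whereas the hypothesis only gives indecomposability over $\mathbb{F}_q$. Your ``Galois descent via Ritt-type uniqueness'' fails in positive characteristic: for instance $g(X)=X^4+X^2+X\in\mathbb{F}_2[X]$ has $g'\equiv 1$ and is indecomposable over $\mathbb{F}_2$, yet over $\mathbb{F}_8$ one has $g=g_1\circ g_2$ with $g_2(X)=X^2+dX$ and $g_1(Y)=Y^2+d^{-1}Y$ for any root $d$ of $T^3+T+1$. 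The clean fix is to bring in the \emph{arithmetic} group $\G=\mathrm{Gal}\bigl(g(X)-t/\mathbb{F}_q(t)\bigr)$: indecomposability over $\mathbb{F}_q$ makes $\G$ primitive, the inertial transposition lies already in $\widehat{\G}\subseteq\G$, so Jordan gives $\G=\mathcal{S}_n$, and then $\widehat{\G}\trianglelefteq\mathcal{S}_n$ together with the presence of an odd permutation forces $\widehat{\G}=\mathcal{S}_n$.

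Conversely, your last paragraph overstates the characteristic-$2$ obstacle. The factorisation $(X-\beta)^2E(X)$ says the places of $K_q(X)$ above $t=\theta$ in the cover $t=g(X)$ have ramification indices $2,1,\ldots,1$; hence for any place $w$ of the splitting field above $t=\theta$ the $I_w$-orbits on the $n$ roots have sizes $2,1,\ldots,1$, tame or wild. Any element of $I_w$ acting nontrivially on the size-$2$ orbit fixes the remaining roots and is the desired transposition; no separate wild-ramification analysis is required.
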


We can suppose $n \geq 3$.  It turns out we have to exclude from
consideration $(n-1)$-tuples  $\bar{a}$ of a certain form as we
now describe. Let $p$ be the characteristic of $\mathbb{F}_q$, i.e.,
$q$ is a power of the prime $p$. The polynomial
$g(X) = X^n+a_1X^{n-1}+ \cdots + a_{n-1}X$ is said to be of form
(\ref{AB}) if we can  express it in the form
\begin{equation}\label{AB}
   XA(X^p)+B(X^p),
\end{equation}
where $A,B$ are polynomials, i.e., $n \equiv 0, 1 ~(\mathrm{mod}\ p)$
and $a_i=0$, whenever $i \not \equiv 0, 1 ~(\mathrm{mod}\ p)$. Given
$a_1, \ldots, a_{n-2} \in \mathbb{F}_q$, set
$F_0(X)=g(X)-a_{n-1}X=X^n+\sum_{i=1}^{n-2}a_iX^{n-i}$. Observe that
$F_0$ has form (\ref{AB}) if and only if $g$ has form (\ref{AB})
for any $a_{n-1} \in \mathbb{F}_q$.

We remark further that if $p=2$, then every polynomial $g$ has the
form (\ref{AB}). Hence, it is necessary from now to impose the
restriction that $q$ is odd.

\begin{lemma} \label{simple}
Suppose $q$ is odd and $n \geq 3$. Let $a_1, \ldots,a_{n-2}$ be any
elements of $\mathbb{F}_q$ such that $F_0$ does not have the form
$(\ref{AB})$. Then, for all but $O(1)$ choices of non-zero elements
$a_{n-1} \in \mathbb{F}_q$, $\widehat{\G}{}^{\bar{a}}=\mathcal S_n$. (Here, as
throughout, the implied constant depends only on $n$.)
\end{lemma}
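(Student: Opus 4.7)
The plan is to invoke Lemma~\ref{GalSn} applied to $g(X) = F_0(X) + a_{n-1}X$, verifying its two hypotheses---indecomposability of $g$ over $\Fq$ and the existence of $\theta \in K_q$ with $g(X) - \theta = (X-\beta)^2 E(X)$, $E$ squarefree and $E(\beta) \ne 0$---for all but $O(1)$ choices of $a_{n-1} \in \Fq^*$.

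First I would show that $g'$ is separable for all but $O(1)$ values of $a_{n-1}$. Since $F_0$ does not have form~(\ref{AB}), $F_0'(X) \notin \Fq[X^p]$, equivalently $F_0''(X) \not\equiv 0$. A repeated root $\beta$ of $g'(X) = F_0'(X) + a_{n-1}$ forces $F_0''(\beta) = 0$ and $a_{n-1} = -F_0'(\beta)$. The equation $F_0''(\beta) = 0$ has at most $\deg F_0'' \leq n-2$ solutions in $K_q$, each pinning $a_{n-1}$ to a single value, so only $O(1)$ values of $a_{n-1}$ give a non-separable $g'$. For the remaining $a_{n-1}$, every critical point of $g$ is simple, i.e., $g''(\beta) \ne 0$.

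Next I would show that for all but $O(1)$ further values of $a_{n-1}$, the critical values $\{g(\beta): g'(\beta)=0\}$ are pairwise distinct. Introduce $P(X,Y), S(X,Y) \in \Fq[X,Y]$ defined by $F_0'(X) - F_0'(Y) = (X-Y)P(X,Y)$ and $F_0(X) - F_0(Y) - F_0'(X)(X-Y) = (X-Y)^2 S(X,Y)$, so that $P(X,X) = F_0''(X)$ and (using $q$ odd) $S(X,X) = -\tfrac{1}{2}F_0''(X)$, both nonzero as polynomials in $X$. A coincidence $g(\beta) = g(\beta')$ between distinct critical points amounts to an off-diagonal common zero of $P$ and $S$. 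Since neither vanishes on the diagonal, $P$ and $S$ share no positive-dimensional component supported off the diagonal, so such pairs $(\beta,\beta')$ form a finite set in $K_q^2$, and each forbids only the single value $a_{n-1} = -F_0'(\beta)$.

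Thus for all but $O(1)$ values of $a_{n-1}$, $g$ has $d := \deg g'$ pairwise distinct critical values from simple critical points. Under the hypothesis on $F_0$ one verifies $d > n/2$: e.g., $d = n-1$ when $p \nmid n$. A decomposition $g = g_1 \circ g_2$ with $\deg g_1 = \ell \ge 2$, $\deg g_2 = k \ge 2$, $k\ell = n$, permits at most $(k-1)+(\ell-1) \le n/2$ distinct critical values, contradicting $d > n/2$; hence $g$ is indecomposable. Choosing any critical point $\beta$ and setting $\theta = g(\beta)$, the simplicity of $\beta$ as a root of $g'$ together with distinctness of the critical values gives $g(X) - \theta = (X-\beta)^2 E(X)$ with $E$ squarefree and $E(\beta) \ne 0$. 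Lemma~\ref{GalSn} then yields $\widehat{\G}{}^{\bar a} = \mathcal S_n$.

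The main obstacle is rigorously establishing that $P$ and $S$ share no off-diagonal positive-dimensional component whenever $F_0$ is not of form~(\ref{AB}); this is precisely where the restriction to odd $q$ is essential (ensuring $S(X,X) \ne 0$), and where some additional bookkeeping is needed in the case $p \mid n$ to confirm that $d > n/2$ still holds.
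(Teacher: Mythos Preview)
Your overall strategy---verify the two hypotheses of Lemma~\ref{GalSn} for all but $O(1)$ values of $a_{n-1}$---matches the paper's, and your treatment of the separability of $g'$ is correct. However, two of your steps have genuine gaps.

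The indecomposability argument via counting critical values can fail when $p\mid n$. You claim $d=\deg g'>n/2$ under the hypothesis that $F_0$ is not of form~(\ref{AB}), but that hypothesis is equivalent only to $F_0''\not\equiv0$; it does not force $\deg F_0'$ to be large. For example, take $p=3$, $n=9$, and $F_0(X)=X^9+a_7X^2$ with $a_7\neq0$: then $F_0''=2a_7\neq0$, so $F_0$ is not of form~(\ref{AB}), yet $\deg g'=\deg F_0'=1<n/2$, and your comparison with the bound $(k-1)+(\ell-1)\le n/2$ yields nothing. This is not ``additional bookkeeping''; the inequality you need is simply false here. The paper (following \cite{Co72}, Lemma~6) argues differently: if $g=F_0+a_{n-1}X$ were decomposable as $g_1\circ g_2$ for more than $O(1)$ values of $a_{n-1}$, then $F_0$, $X$, and $1$ would all have to be polynomials in a common $g_2$ of degree $\ge 2$ (``totally composite''), which is impossible since $X$ cannot be. That argument works independently of $\deg g'$.

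The claim that $P$ and $S$ share no off-diagonal component is likewise unjustified: the non-vanishing of $P(X,X)$ and $S(X,X)$ only rules out the diagonal $\{X=Y\}$ itself as a common component, not other curves in $\mathbb{A}^2$. You correctly identify this as an obstacle, but what is actually needed is either a coprimality argument for $P$ and $S$ or a direct proof that the image of $V(P)\cap V(S)\setminus\{X=Y\}$ under $(\beta,\beta')\mapsto -F_0'(\beta)$ is finite, and neither is supplied. The paper bypasses this by invoking \cite{Co72}, Lemma~7, whose input is the linear independence of $F_0,X,1$ over $\Fq(X^p)$---precisely the condition that $F_0$ is not of form~(\ref{AB}).
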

\begin{proof}
It has to be shown that, for all but $O(1)$ choices of $a_{n-1}$, $g$ is indecomposable
over $\mathbb{F}_{q^m}$ and,
for any $\theta \in K_q$, either $g(X)- \theta$ is square-free
or factorizes as $(X-\beta)^2E(X)$, as described in  Lemma $\ref{GalSn}$.
The proof of this follows exactly that of Lemma 5 of \cite{Co72},
in the special case in which $s=2$ and the polynomials $F_0, F_1, F_2$
(in the notation of Theorem 3 of \cite{Co72}) are, respectively, $F_0$
as defined here, $F_1(X)=X, F_2(X)=1$. The proof of \cite{Co72},
Lemma 5, is derived from that of Lemmas 6, 7, and the identical
arguments can be used in this particular situation. (Note, in particular,
that assumption $p\nmid n$ of \cite{Co72}, Theorem 3, is not required
at this stage.) The main thrust of the proof of \cite{Co72}, Lemma 6, is that
with $O(1)$ exceptional values of $a_{n-1}$, $g$ is indecomposable (actually even over
$K_q$). Otherwise, $F_0,F_1,F_2$ would be  ``totally composite", which is evidently not the case.
Further, the assumption that $F_0, F_1, F_2$ are linearly independent
over $\mathbb{F}_{q^m}(X^p)$ of \cite{Co72}, Theorem 3, in our situation,
is a consequence of the assumption that $F_0$ does not have form (\ref{AB}).

The conclusion of \cite{Co72}, Lemma 7, is that if $a_{n-1}$ is one
of the $q-O(1)$ (non-zero) elements of $\mathbb{F}_q$ that have not
been excluded, then, for every $\theta \in K_q$, either $g(X)- \theta$
is square-free or has the form $(X-\beta)^2E(X)$. Now, let
$\beta \in K_q$ be any root of the formal derivative $g'(X)$. Indeed,
since $g$ does not have the form (\ref{AB}), there is such an element
$\beta$. Set $\theta = g(\beta)$. Then $\beta$ is a repeated root of
$g(X) - \theta$ of multiplicity $2$ and there are no other repeated
roots of $g(X) - \theta$. Then Lemma \ref{GalSn} applies and we
conclude that $\widehat{\G}{}^{\bar{a}}= \mathcal S_n$.
\end{proof}

\begin{theorem} \label{Nthm}
Suppose $q$ is odd, $n \geq 3$ and $ m \geq 2$. Then
\[ N(m,n, q) =\frac{q^{m+n-1}}{n} +O(q^{m+n-2}).\]
\end{theorem}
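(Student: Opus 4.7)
The plan is to combine the relation (\ref{MN}) with a term-by-term application of Proposition \ref{asymp}, then control the resulting constant using Lemma \ref{simple}. For each $\bar a \in \mathbb{F}_q^{n-1}$ such that $g(X) = X^n + a_1 X^{n-1} + \cdots + a_{n-1}X$ is not a polynomial in $X^p$, Proposition \ref{asymp} yields
\[
m\,|\mathcal I_m(\bar a)| = \frac{|\widehat{\G}^{\bar a}_n|}{|\widehat{\G}^{\bar a}|}\,q^m + O(q^{m/2}),
\]
while the remaining $\bar a$ (those for which $g$ is a polynomial in $X^p$, a subcase of form (\ref{AB})) contribute at most $O(q^{m+n-2})$ to $m\,M(m,n,q)$ by the trivial bound $|\mathcal I_m(\bar a)| \leq q^m/m$. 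Summing over $\bar a$ and substituting into (\ref{MN}) gives
\[
N(m,n,q) = c\,q^m + O(q^{n-1+m/2}) + O(q^{m+n-2}), \qquad c := \sum_{\bar a \in \mathbb{F}_q^{n-1}} \frac{|\widehat{\G}^{\bar a}_n|}{|\widehat{\G}^{\bar a}|},
\]
so the theorem reduces to showing $c = q^{n-1}/n + O(q^{n-2})$.

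To estimate $c$, I would partition the tuples into \emph{good} ones (where $\widehat{\G}^{\bar a} = \mathcal{S}_n$, contributing $1/n$ each by (\ref{Sn})) and \emph{bad} ones (contributing at most $1$ each). By Lemma \ref{simple}, $\bar a$ is good whenever $F_0$ does not have form (\ref{AB}) and $a_{n-1}$ lies outside an $O(1)$-sized exceptional subset of $\mathbb{F}_q$ (which we may enlarge to include $0$). A direct counting argument shows that the number of $(a_1, \ldots, a_{n-2})$ for which $F_0$ has form (\ref{AB}) is at most $q^{n-3}$ for every odd prime $p$ and every $n \geq 3$. Hence the number of bad tuples is at most $q^{n-2}\cdot O(1) + q^{n-3}\cdot q = O(q^{n-2})$, from which
\[
c = \frac{q^{n-1} - O(q^{n-2})}{n} + O(q^{n-2}) = \frac{q^{n-1}}{n} + O(q^{n-2}).
\]

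Substituting back gives $N(m,n,q) = q^{m+n-1}/n + O(q^{m+n-2}) + O(q^{n-1+m/2})$, and since $m \geq 2$ forces $n-1+m/2 \leq m+n-2$, the last error term is absorbed into the penultimate one, completing the proof. The main obstacle is the uniform combinatorial bound ``at most $q^{n-3}$ form-(\ref{AB}) polynomials $F_0$'' for all odd $p$ and all $n \geq 3$: this set is empty unless $p \mid n$ or $p \mid n - 1$, and in those cases the vanishing conditions $a_i = 0$ for $i \not\equiv n, n-1 \pmod p$ leave the free coefficients among $a_1, \ldots, a_{n-2}$ to lie in at most two fixed residue classes modulo $p$; a short case check in $n \equiv 0, 1 \pmod p$ (using $p \geq 3$) confirms that the number of free coefficients is at most $n-3$.
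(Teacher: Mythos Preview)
Your proposal is correct and follows essentially the same approach as the paper: both use Lemma~\ref{simple} to show that all but $O(q^{n-2})$ tuples $\bar a$ satisfy $\widehat{\G}{}^{\bar a}=\mathcal{S}_n$, then apply Cohen's asymptotic for the good tuples and bound the bad ones trivially (the paper's exclusion via $a_1=0$ or $a_2=0$ is exactly your $q^{n-3}$ bound on form-(\ref{AB}) tuples $(a_1,\ldots,a_{n-2})$, multiplied by $q$). The only cosmetic difference is that the paper works directly with $N(m,n,q)$, citing \cite[Theorem~1]{Cohen2} for the count of $\alpha\in\mathbb{F}_{q^m}$ with $g(X)-\alpha$ irreducible, whereas you detour through $M(m,n,q)$ via (\ref{MN}) and recast the good/bad split as the estimate $c=q^{n-1}/n+O(q^{n-2})$.
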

\begin{proof}
There are in total $q^{n-1}$ choices of $\bar{a}$ in the polynomial $g$.
We show that for all but $O(q^{n-2})$ of them $\widehat{\G}{}^{\bar{a}} =\mathcal S_n$,
whence, by \cite[Theorem 1]{Cohen2}
for every non-excluded choice $\bar{a}$, the number of
$\alpha \in \mathbb{F}_{q^m}$ such that $g(X)-\alpha$ is irreducible is
\begin{equation}\label{coh}
\frac {q^m}{n} +O(q^{m/2}).
\end{equation}
Given $a_1, \ldots, a_{n-2}$ in $\mathbb{F}_q$, let the implied constant
in the number of values of $a_{n-1}$ to be excluded be bounded above by
$d (=d_n)$. Altogether, this excludes at most $dq^{n-2}$ choices of $\bar{a}$.
When $n \not \equiv 0, 1 ~(\mathrm{mod}\ p)$, by Lemma \ref{simple}, for
the remaining choices of $\bar{a}$, $\widehat{\G}{}^{\bar{a}} =\mathcal S_n$ and, by
(\ref{coh}),
\begin{equation} \label{excl}
N(m,n,q) \geq \frac {q^{m+n-1}-dq^{m+n-2}}{n} +O(q^{n-1+m/2})
  = \frac{q^{m+n-1}}{n} +O(q^{m+n-2}).
\end{equation}
When $n \equiv 0, 1 ~(\mathrm{mod}\ p)$, further values of $\bar{a}$
have to be excluded because, in Lemma \ref{simple}, $g$ has the form
$(\ref{AB})$. In particular, when $p|n$, then these further excluded
values all have $a_1=0$, whence their total number does not exceed
$q^{n-2}$. Similarly, if $ n \geq 3$ and $n \equiv 1 ~(\mathrm{mod} \ p)$,
then $n \geq p+1 \geq 4$ and all further excluded  $\bar{a}$ have $a_2=0$.
Thus their total number again does not exceed $q^{n-2}$. The argument in
these cases then proceeds as at (\ref{excl}) with $d$ replaced by $d+1$.
\end{proof}
\begin{corollary} \label{cor}
Suppose $q$ is odd, $n \geq 3$ and $m \geq 2$. Then
\[ M(m,n,q)=|\Psi_I(\mathrm{TSRI}(m, n, q))|
  =\frac{q^{m+n-1}}{mn} +O(q^{m+n-2}/m).\]
\end{corollary}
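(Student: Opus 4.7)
The plan is to combine the two ingredients already developed: the asymptotic estimate for $N(m,n,q)$ from Theorem \ref{Nthm}, and the bookkeeping relation (\ref{MN}) linking $N(m,n,q)$ with $M(m,n,q)$. The identity $M(m,n,q) = |\Psi_I(\mathrm{TSRI}(m,n,q))|$ is simply the definition introduced in the paragraph just before (\ref{MN}), so only the asymptotic statement needs any real argument.

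First I would rewrite (\ref{MN}) as
\[
mM(m,n,q) = N(m,n,q) + O(q^{n-1+m/2}),
\]
and then substitute the estimate $N(m,n,q) = q^{m+n-1}/n + O(q^{m+n-2})$ from Theorem \ref{Nthm}, obtaining
\[
mM(m,n,q) = \frac{q^{m+n-1}}{n} + O(q^{m+n-2}) + O(q^{n-1+m/2}).
\]

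The only point that requires a brief verification is that the second error term is dominated by the first. Since the hypothesis $m \geq 2$ gives $m/2 \leq m-1$, we have $n-1+m/2 \leq m+n-2$, so indeed $O(q^{n-1+m/2})$ is absorbed into $O(q^{m+n-2})$. Dividing through by $m$ then yields
\[
M(m,n,q) = \frac{q^{m+n-1}}{mn} + O(q^{m+n-2}/m),
\]
as claimed. There is no genuine obstacle here, since all the substantive work — in particular the symmetric-group computation of the Galois group and the control of the excluded $(n-1)$-tuples $\bar a$ — has already been carried out in Lemma \ref{simple} and Theorem \ref{Nthm}.
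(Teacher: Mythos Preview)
Your proof is correct and follows exactly the same route as the paper, which simply cites Theorem~\ref{Nthm}, the relation~(\ref{MN}), and the definition of $M$. Your explicit check that $n-1+m/2 \leq m+n-2$ under the hypothesis $m \geq 2$ makes the absorption of the second error term transparent, which the paper leaves implicit.
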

\begin{proof}
This follows from Theorem \ref{Nthm}, along with $(\ref{MN})$ and the
definition of $M$.
\end{proof}

From Corollary \ref{cor}, when $q$ is odd, for $q >q_n$ the constant $c$
in Theorem \ref{asymptsr} is positive.

\begin{theorem} \label{irrTSRAsympoddq}
Suppose that $q$ is odd and $m>1$. Then the number $\left| \TSRI(m,n,q)\right|$
of irreducible TSRs of order $n>2$ over $\Fqm$ satisfies
$$\left| \TSRI(m,n,q)\right|=\frac{q^{m+n-1}}{mn}
  \displaystyle \prod_{i=1}^{m-1}(q^m-q^i)
 + O\left(q^{m^2+n-2}/m\right).$$
\end{theorem}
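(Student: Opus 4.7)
The plan is to combine the two key ingredients already established in the paper: Theorem \ref{NoTSR}, which factors the count $|\TSRI(m,n,q)|$ as $|\Psi_I(\TSRI(m,n,q))| \cdot \prod_{i=1}^{m-1}(q^m - q^i)$, and Corollary \ref{cor}, which under the hypotheses $q$ odd, $n \geq 3$, $m \geq 2$ gives the asymptotic
\[
  |\Psi_I(\TSRI(m,n,q))| = \frac{q^{m+n-1}}{mn} + O\!\left(\frac{q^{m+n-2}}{m}\right).
\]
Since the hypotheses of the theorem ($q$ odd, $m > 1$, $n > 2$) align exactly with those of the corollary, no further case analysis is required.

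First I would multiply the above asymptotic through by the factor $\prod_{i=1}^{m-1}(q^m - q^i)$ supplied by Theorem \ref{NoTSR}. The main term transforms directly into
\[
  \frac{q^{m+n-1}}{mn}\,\prod_{i=1}^{m-1}(q^m - q^i),
\]
which is precisely the main term claimed. The only genuine step is to track the size of the error term after multiplication. For this I would note the elementary bound
\[
  \prod_{i=1}^{m-1}(q^m - q^i) \leq \prod_{i=1}^{m-1} q^m = q^{m(m-1)} = q^{m^2 - m},
\]
so that the error contribution is
\[
  O\!\left(\frac{q^{m+n-2}}{m}\right) \cdot O\!\left(q^{m^2 - m}\right) = O\!\left(\frac{q^{m^2 + n - 2}}{m}\right),
\]
matching the stated bound exactly.

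There is no real obstacle here; the work has all been done in the preceding sections. The only small point worth being careful about is that the implied constant in Corollary \ref{cor} depends on $n$, and this dependence is preserved after multiplication, which is consistent with the convention used throughout Section \ref{Asymptoticodd} that $O$-constants depend on $n$. Thus the theorem follows as a one-line corollary of Theorem \ref{NoTSR} and Corollary \ref{cor}, and I would present the proof in exactly that compressed form.
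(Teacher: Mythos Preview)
Your proposal is correct and matches the paper's own proof exactly: the paper simply states that the result follows immediately from Theorem~\ref{NoTSR} and Corollary~\ref{cor}. Your additional care in bounding $\prod_{i=1}^{m-1}(q^m-q^i)\le q^{m^2-m}$ to track the error term is a welcome elaboration of what the paper leaves implicit.
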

\begin{proof}
The proof follows immediately from Theorem \ref{NoTSR} and Corollary \ref{cor}.
\end{proof}

We note that the main term in Theorem~\ref{Nthm} corresponds to the main
term in  Theorem~\ref{asymptsr}, however, the error term is slightly
increased in most of the cases. It may be interesting to determine if the
formula in Theorem \ref{asymptsr} and hence in Theorem \ref{irrTSRAsymp}
is asymptotic in nature when $q$ is even.
\section*{Acknowledgments}
Sartaj Ul Hasan and Qiang Wang would like to thank Daqing Wan for some
helpful discussions.

\end{document}